\date{}
\renewcommand{\uppercasenonmath}[1]{}
   \DeclareMathSizes{\@xipt}{\@xipt}{7}{2}
\numberwithin{equation}{section} \theoremstyle{plain}
\newtheorem*{theorem*}{Main Theorem}
\newtheorem{theorem}{Theorem}[section]
\newtheorem*{corollary*}{Corollary}
\newtheorem{lemma}[theorem]{Lemma}
\newtheorem*{lemma*}{Lemma}
\newtheorem{proposition}[theorem]{Proposition}
\newtheorem*{proposition*}{Proposition}
\newtheorem{remark}[theorem]{Remark}
\newtheorem*{remark*}{Remark}
\newtheorem*{example*}{Example}
\newtheorem{definition}[theorem]{Definition}
\newtheorem*{definition*}{Definition}
\newtheorem*{conjecture*}{Conjecture}
\newtheorem*{ack*}{ACKNOWLEDGEMENTS}
\newcommand{\bc}{\begin{center}}
\newcommand{\ec}{\end{center}}
\newcommand{\pf}{\noindent\begin {proof}}
\newcommand{\epf}{\end{proof}}
\begin{document}
\newcommand{\Ho}{H}\newcommand{\Tom}{T}
\newcommand{\Eo}[1]{E^{#1}}
\newcommand{\To}[1]{T^{#1}}
\begin{center}
{\large  \bf A Basis of the $q$-Schur Module $\mathcal{A}^\lambda$ }

\vspace{0.8cm} {\small \bf  Xingyu Dai$^{a,\;(1)}$, \ \  Fang Li$^{a,\;(2)}$, \ \ Kefeng Liu$^{a,b\;(3)}$}\\
\vspace{0.1cm}
$^a$Center of Mathematical Sciences, Zhejiang University, Zhejiang 310027, China\\
$^b$Department of Mathematics, University of California, Los Angeles, USA\\
E-mail: $(1)$ daixingyu12@126.com;  $(2)$ fangli@zju.edu.cn;  $(3)$ liu@math.ucla.edu

\end{center}

\bigskip
\centerline { \bf  Abstract}

In this paper, we construct the so-called $q$-Schur modules as left principle ideals of cyclotomic $q$-Schur algebras, and prove that they are isomorphic to those cell modules defined in \cite{8} and \cite{15} in any level $r$. After that, mainly, we prove that these $q$-Schur modules are free and construct  their basis. This result gives the new versions of some several known results such as standard basis and the branching theorem. With the help of this realizations and the new basis, we give a new proof of the Branch rule of Weyl modules which was first discovered by Wada in \cite{23}.

\leftskip10truemm \rightskip10truemm
 \noindent
\\
\vbox to 0.3cm{}\\
{\bf Keywords:} $q$-Schur module, cyclotomic $q$-Schur algebra, branching theorem \\
{\bf 2010 Mathematics Subject Classification:} 20G43

 \leftskip0truemm
\rightskip0truemm
\bigskip

\section{\bf Introduction}

Weyl modules for a cyclotomic $q$-Schur algebra $\mathscr{S}_{n,r}$ have been investigated recently in the context of cellular algebras (see \cite{8}). These modules are defined as quotient modules of certain {\em permutation} modules, that is,  as {\em cell modules} via cellular basis.

However, the classical theory \cite{3} and the works \cite{10} \cite{14} in the case when $m=1,2$ suggested that a construction as \emph{submodules} without using cellular basis should exist in the case of Iwahori-Hecke algebra.
Following Dipper and James' work \cite{6},  when the level $l$ equals to one, {\em basis} and {\em structure} appearing in Hecke algebras can still be constructed in $q$-Schur algebras with a totally different way.

 This phenomena has a great change to stay valid in the case of cyclotomic $q$-Schur algebras with large level, which is the inspiration of this paper. We can solve the difficulties by constructing a series of principle left ideals of the cyclotomic $q$-Schur algebras, where each single one is generated by a single element $z_\lambda$. The element $z_\lambda$ we construct is $\varphi_{\lambda w}^1\cdot T_{w_\lambda}\cdot y_{\lambda'}$ by the right Ariki-Koike algebra $\mathcal{H}_{n,r}$-module structure, where the element $y_{\lambda'}$ and morphism $\varphi_{\lambda w}^d$ are defined in \ref{m>1} and \ref{mainbasis} respectively. i.e., {\em $q$-Schur module} $\mathcal{A}^\lambda$ is defined as $\mathscr{S}_{n,r}\cdot \varphi_{\lambda w}^1T_{w_\lambda}y_{\lambda'}$ (Definition \ref{mainbasis}). Then in Theorem 3.1, we prove that the $\mathcal{A}^\mu$ as $\mathscr{S}_{n,r}\cdot z_\mu$ is exactly a realization of Weyl modules in the category of modules over cyclotomic $q$-Schur algebras which is a generalization of Dipper and James' work \cite{6}. After that, we construct and prove a $R$-linear basis of the $q$-Schur module $\mathcal{A}^\mu$  in the main result as follows:  \\
\\
{\bf Theorem 3.5.}~~{\em
Suppose that $\lambda\in\Lambda_{n,r}^+(\textbf{m})$. Then the $q$-Schur module
$\mathcal {A}^\lambda$ is free as a $R$-module and
$\{\varphi^{1_A}_{\mu\lambda}\cdot z_\lambda|A\in \mathcal
{T}_\mu^{ss}(\lambda)\  and\
\mu\in\Lambda_{n,r}(\textbf{m})\}\subseteq\mathcal {A}^\lambda$ is a basis.}\\
\\
Here $\mu$ is any multipartition (defined in Section 2.1) and $A$ is  its semi-standard tableau (defined in Remark \ref{redefinition}). This theorem is something like ``the half way" of the semi-standard basis that appeared in \cite{8}. With the help of this basis constructed, we can show a new version of the Branch rule which happens in the category of modules over a cyclotomic $q$-Schur algebra.

The paper is organised as follows. In Section 3, we construct some left ideals $\{\mathcal{A}^\mu\}$, which are called \emph{$q$-Schur modules} over the cyclotomic $q$-Schur algebra ${}_R\mathscr{S}_{n,r}$, and prove that this $q$-Schur modules are the same as Weyl modules in \cite{8}. After that, we clarified that these ideals are spanned by the natural basis as $\{\varphi_{\mu\lambda}^{1_A}\cdot z_\lambda|\mu\in \Lambda_{n,r}(\textbf{m})\text{ and }$A$\in \mathcal{T}^{ss}_\mu(\lambda)\}$, just as a parallel work of Dipper and James in \cite{10}.
In Section 4,  using of these new basis in $q$-Schur modules, we construct their filtrations, as a new point of view to the Branch rule in Wada's work \cite{23}.

\section{\bf Prelimilaries}

\subsection{Some notations about tableaux}
A {\emph{composition}} $\lambda$ of $n$ is a finite sequence of
non-negative integers $(\lambda_1,\lambda_2,\ldots,\lambda_m)$ such
that $|\lambda|=\sum_i\lambda_i=n$. There is a partial
order $\unlhd$(resp. $\unrhd$) within compositions of $n$ as: we
denote $\lambda\unlhd\mu$ when
$\sum_{i=1}^k\lambda_i\leq\sum_{i=1}^k\mu_i$(resp.
$\sum_{i=1}^k\lambda_i\geq\sum_{i=1}^k\mu_i$) for all $1\leq k\leq
m$. Moreover, if a composition $\lambda$ satisfies that $\lambda_1\geq\lambda_2\geq\cdots\lambda_m$,  it is called a \emph{partition}.
For later use, let $\Lambda(n)$ (resp. $\Lambda^+(n)$) denote the
set of all compositions (resp. all partitions) of $r$.

Let $\mathfrak{S}_n$ denote the symmetric
group of all permutations of $1,\ldots,n$ with Coxeter generators
$s_i:=(i,i+1)$, and $\mathfrak{S}_\lambda$ the Young
subgroup corresponding to the composition $\lambda$ of $n$, which is
denoted by:
\begin{eqnarray*}
\mathfrak{S}_\lambda=\mathfrak{S}_\textbf{a}=\mathfrak{S}_{\{1,\ldots,a_1\}}\times\mathfrak{S}_{\{a_1+1,\ldots,a_2\}}\times\cdots\times\mathfrak{S}_{\{a_{n-1}+1,\ldots,a_n\}},
\end{eqnarray*}
where $\textbf{a}=[a_0,a_1,\ldots,a_n]$ with $a_0=0$ and
$a_i=\lambda_1+\cdots+\lambda_i$ for all $i=1,\ldots,m$. We
denote by $\mathscr{D}_\lambda$ the set of distinguished
representatives of right $\mathfrak{S}_\lambda$-cosets and write
$\mathscr{D}_{\lambda\mu}:=\mathscr{D}_\lambda\cap\mathscr{D}_\mu^{-1}$,
which is the set of distinguished representatives of double cosets
$\mathfrak{S}_\lambda\setminus\mathfrak{S}_n/\mathfrak{S}_\mu$.

One can identify a composition $\lambda$ with {\em{Young
diagram}} and we say that $\lambda$ is the \emph{shape} of the
corresponding Young diagram. A $\lambda$-\emph{tableau} is a
filling of the $n$ boxes of the Young diagram of $\lambda$ of the
numbers $1,2,\ldots,n$. Denote the set of $\lambda$-tableaux by
$\mathcal {T}(\lambda)$ and usually denote $\mathfrak{t}$ as an
element of $\mathcal {T}(\lambda)$.

 For
$\lambda\in\Lambda(n)$, let $\lambda'$ be the dual partition of
$\lambda$, i.e., $\lambda'_i:=\#\{j;\lambda_j\geq i\}$. There is a
unique element $w_\lambda\in \mathfrak{S}_n$ with the
\emph{trivial intersection property} in (4.1) of \cite{10}:
\begin{eqnarray}
w_\lambda^{-1}\mathfrak{S}_\lambda
w_\lambda\cap\mathfrak{S}_{\lambda'}=\{1\}.
\end{eqnarray}

We can represent $w_\lambda$ with help of Young diagrams. For
example, $\tiny\yng(3,2)$ represents $\lambda=(3,2)$, then $w_\lambda\in\mathfrak{S}_n$ is
defined by the equation $\mathfrak{t}^\lambda w_\lambda=\mathfrak{t}_\lambda$, where
$\mathfrak{t}^\lambda$ (resp. $\mathfrak{t}_\lambda$) is the
$\lambda$-tableau obtained by putting the number $1,2,\ldots,n$ in
order into the boxes from left to right down successive rows (resp.
columns). In the example, $$\mathfrak{t}^{(3,2)}=\tiny\young(123,45),\;\;\;\;\;
 \mathfrak{t}_{(3,2)}=\tiny\young(135,24).$$

\begin{definition}\cite{6}\label{chi}
Suppose that $\mathfrak{t}_1$ is a $\lambda$-tableau and $\mathfrak{t}_2$ is a
$\mu$-tableau, where both $\lambda$, $\mu\in \Lambda^+(n)$. Let $\chi(\mathfrak{t}_1,\mathfrak{t}_2)$ be a $n$-by-$n$ matrix whose
entry in row $i$ and column $j$ is the cardinality of following set:
$$\{ \text{entries in the first }i\text{  rows of }\mathfrak{t}_1\}\cap\{\text{entries in the first }j\text{ columns of }\mathfrak{t}_2\}.$$

\end{definition}

\begin{remark}\cite{6}\label{threeproper}
 If $\mathfrak{t}_1$ and $\mathfrak{t}_1'$ are $\lambda$-tableaux and $\mathfrak{t}_2$ and $\mathfrak{t}_2'$
are $\mu$-tableaux for $\lambda$ and $\mu\in \Lambda^+(n)$, then write $\chi(\mathfrak{t}_1,\mathfrak{t}_2)\geq\chi(\mathfrak{t}_1',\mathfrak{t}_2')$ if
each entry in $\chi(\mathfrak{t}_1,\mathfrak{t}_2)$ is not small than
corresponding one in $\chi(\mathfrak{t}_1',\mathfrak{t}_2')$. Write
$\chi(\mathfrak{t}_1,\mathfrak{t}_2)>\chi(\mathfrak{t}_1',\mathfrak{t}_2')$ if, in addition,
$\chi(\mathfrak{t}_1,\mathfrak{t}_2)\neq\chi(\mathfrak{t}_1',\mathfrak{t}_2')$.

The following properties are immediate from the definitions.
\begin{eqnarray}
\chi(\mathfrak{t}_1w,\mathfrak{t}_2w)&=&\chi(\mathfrak{t}_1,\mathfrak{t}_2)\quad  for\  all\  w\in\mathfrak{S}_r.\\
\chi(\mathfrak{t}_1w,\mathfrak{t}_2)&=&\chi(\mathfrak{t}_1,\mathfrak{t}_2)\quad  if\  w\in\mathfrak{S}_\lambda.\\
\chi(\mathfrak{t}_1,\mathfrak{t}_2w)&=&\chi(\mathfrak{t}_1,\mathfrak{t}_2)\quad  if\  w\in\mathfrak{S}_{\mu'}.
\end{eqnarray}

\end{remark}

Let $\textbf{m}=(m_1,\cdots, m_r)\in \mathbb{Z}_{>0}^r$ be an $r$-tuple of positive integers. Define a subset of $r$-composition of $n$ as:
\begin{eqnarray*}
\Lambda_{n,r}(\textbf{m})=
\Bigg\{
\mu=(\mu^{(1)},\cdots,\mu^{(r)})
\left|
\begin{array}{c}
\mu^{(k)}=(\mu_1^{(k)},\cdots,\mu_{m_k}^{(k)})\in \mathbb{Z}^{m_k}_{\geq0}\\
\sum_{k=1}^r\sum_{i=1}^{m_k}\mu^{(k)}_i=n
\end{array}
\right\}.
\end{eqnarray*}

We denote by $|\mu^{(k)}|=\sum_{i=1}^{m_k}\mu_i^{(k)}$ (resp. $|\mu|=\sum_{k=1}^r|\mu^{(k)}|$) the size of $\mu^{(k)}$ (resp. the size of $\mu$). We define the map $\zeta: \Lambda_{n,r}(\textbf{m})\rightarrow \mathbb{Z}_{\geq0}^r$ by $\zeta(\mu)=(|\mu^{(1)}|,|\mu^{(2)}|,\cdots,|\mu^{(r)}|)$ for
$\mu\in \Lambda_{n,r}(\textbf{m})$. Put $\Lambda^+_{n,r}(\textbf{m})=\{\lambda\in \Lambda_{n,r}(\textbf{m})|\lambda_1^{(k)}\geq\lambda_2^{(2)}\geq\cdots\geq\lambda_{m_k}^{(k)}\text{ for any }k=1,\cdots,r\}$.

Let $\lambda':=(\lambda^{(r)}{}',\ldots,\lambda^{(1)}{}')$
denote the $m$-composition \emph{\emph{dual}} to $\lambda$. By
concatenating the components of $\lambda$, the resulting composition
of $r$ will be denoted by
$$\overline{\lambda}:=\lambda^{(1)}\vee\cdots\vee\lambda^{(r)}.$$

We can also identify $\lambda\in \Lambda_{n,r}(\textbf{m})$ with a series of Young diagrams. For example,
$\lambda=((31),(21),(2))$ is identified with
$$\big(\tiny\yng(3,1),\  \tiny\yng(2,1),\  \tiny\yng(2)\big).$$

Similarly, we can define two tableaux $\mathfrak{t}^\lambda$ and $\mathfrak{t}_\lambda$ in multi-composition case.
Let $\mathfrak{t}^{\lambda}$ (resp. $\mathfrak{t}_\lambda$) be the $\lambda$-tableau obtained by
setting the numbers $1,\ldots,r$ in order into the boxes down
successive rows (resp. columns) in the first (resp. last) diagram of $\lambda$, then in the
second (resp. second last) diagram and so on. Due to the example above, we have

$$\mathfrak{t}^\lambda=(\tiny\young(123,4),\  \tiny\young(56,7),\  \young(89)).$$
$$\mathfrak{t}_\lambda=(\tiny\young(689,7),\  \tiny\young(35,4),\  \young(12)).$$

Give
the element $w_\lambda\in\mathfrak{S}_n$ by $\mathfrak{t}^\lambda
w_\lambda=\mathfrak{t}_\lambda$ corresponding to a $r$-partition
$\lambda=(\lambda^{(1)},\ldots,\lambda^{(r)})$ of $n$. More precisely, if $\mathfrak{t}^i$
denote the $i$-th subtableau of
$\mathfrak{t}^\lambda$,
 then define $w_{(i)}$ by $\mathfrak{t}^iw_{(i)}=\mathfrak{t}_i$.

\subsection{Ariki-Koike algebras and cyclotomic $q$-Schur algebras}

Now    recall the notion of the cyclotomic $q$-Schur algebra $\mathscr{S}_{n,r}$ from \cite{8} and the presentations of $\mathscr{S}_{n,r}$ by generators
and fundamental relations given in \cite{22}.

Let $R$ be a commutative ring, and take parameters $q,Q_1,\cdots,Q_r\in R$ such that $q$ is invertible in $R$. The Ariki-Koike algebra $\mathcal{H}_{n,r}$ is the associative algebra with $1$ over $R$ generated by $T_0,T_1,\ldots,T_{n-1}$ with the following defining relations:
\begin{eqnarray*}
&&(T_0-Q_1)(T_0-Q_2)\cdots(T_0-Q_r)=0,\\
&&(T_i-q)(T_i+q^{-1})=0\qquad\qquad\qquad\qquad\qquad (1\leq i\leq n-1),\\
&&T_0T_1T_0T_1=T_1T_0T_1T_0,\\
&&T_iT_{i+1}T_i=T_{i+1}T_iT_{i+1} \qquad\qquad\qquad\qquad\qquad (1\leq i\leq n-2),\\
&&T_iT_j=T_jT_i  \qquad\qquad\qquad\qquad\qquad\qquad \qquad\   (|i-j|\geq 2).
\end{eqnarray*}

The subalgebra of $\mathcal{H}_{n,r}$ generated by $T_1,\cdots,T_{n-1}$ is isomorphic to the\emph{ Iwahori-Hecke algebra} $\mathcal{H}_n$ (sometimes we write it $\mathcal{H}(\mathfrak{S}_n)$) in \cite{7}.
For $w\in \mathfrak{S}_n$,  denote by $\ell(w)$ the length of $w$ and by $T_w$ the \emph{standard basis} of $\mathcal{H}_{n}$ corresponding to $w$.

For each $r$-composition
$\lambda=(\lambda^{(1)},\ldots,\lambda^{(r)})$, define
$[\lambda]:=[a_0,a_1,\ldots,a_r]$ such that $a_0:=0$ and
$a_i:=\sum_{j=1}^i|\lambda^{(j)}|$. In the case of Iwahori-Hecke algebras, we can define a element
$m_\lambda\in \mathcal {H}_n$ as $m_\lambda:=\sum\limits_{w\in
\mathfrak{S}_\lambda} T_w$ and $w_\lambda\in\mathfrak{S}_n$ is defined in the above subsection.

\begin{definition}\label{m>1}
Let $\mathcal{H}_{n,r}$ be a cyclotomic Hecke algebra with generators
$\{T_0,T_1,\ldots,T_{n-1}\}$, and elements $L_1=T_0$,
$L_i=q^{-1}T_{i-1}L_{i-1}T_{i-1}$ for $i=2,\cdots,n$, and put
$\pi_0=1$, $\pi_a(x)=\Pi_{j=1}^a(L_j-x)$ for any $x\in R$ and any
positive integer $a$. Following \cite{8},  we can construct a series of numbers as
$\textbf{a}=[\lambda]=[a_0,a_1,\ldots,a_r]$. Define that
\begin{eqnarray*}
u^+_\textbf{a}=\pi_{a_1}(Q_2)\cdots\pi_{a_{r-1}}(Q_r)\ \ and \ \
u^-_\textbf{a}=\pi_{a_1}(Q_{r-1})\cdots\pi_{a_{r-1}}(Q_1),
\end{eqnarray*}
and, for $\lambda\in\Lambda_{n,r}(\textbf{m})$,  define that
\begin{eqnarray*}
x_\lambda:=u_{[\lambda]}^+m_{\overline{\lambda}}=m_{\overline{\lambda}}u_{[\lambda]}^+\
and\
y_\lambda:=u_{[\lambda]}^-m_{\overline{\lambda}}=m_{\overline{\lambda}}u_{[\lambda]}^-.
\end{eqnarray*}

Define the right ideal as $M^\lambda:=x_\lambda\mathcal{H}_{n,r}$ which is always called \emph{permutation module}.

\end{definition}

The cyclotomic $q$-Schur algebra $\mathscr{S}_{n,r}$ associated to $\mathcal{H}_{n,r}$ is defined by
\begin{eqnarray*}
_R\mathscr{S}_{n,r}={}_R\mathscr{S}_{\Lambda_{n,r}}(\textbf{m})=\text{End}_{\mathcal{H}_{n,r}}\bigg(\bigoplus\limits_{\mu\in \Lambda_{n,r}(\textbf{m})}M^\mu\bigg).
\end{eqnarray*}

In order to describe a presentation of ${}_R\mathscr{S}_{n,r}$, we prepare some notations.

Put $m=\sum_{k=1}^rm_k$, and define a ``\emph{dominant order in multipartitions}". i.e., for $\lambda,\mu\in\Lambda_{n,r}(\textbf{m})$ and $1\leq l\leq r$, $1\leq j\leq m_l$,
 $$\lambda\unrhd\mu\;\; \Leftrightarrow\;\; \sum_{i=1}^{l-1}|\lambda^{(i)}|+\sum_{k=1}^{j}\lambda^{(l)}_k\geq\sum_{i=1}^{l-1}|\mu^{(i)}|+\sum_{k=1}^{j}\mu^{(l)}_k.$$

For $(i,k)\in \Gamma'(\textbf{m})$, we define the elements $E_{(i,k)}$, $F_{(i,k)}\in {}_R\mathscr{S}_{n,r}$ by
\begin{eqnarray*}
\qquad E_{(i,k)}(m_\mu\cdot h)=
\left\{
\begin{array}{ccc}
q^{-\mu_{i+1}^{(k)}+1}\bigg(\sum\limits_{x\in X_\mu^{\mu+\alpha_{(i,k)}}}q^{\ell(x)}T_x^*\bigg)h_{+(i,k)}^\mu m_\mu\cdot h &if&\  \mu+\alpha_{(i,k)}\in \Lambda_{n,r}(\textbf{m}),\\
0&if&\  \mu+\alpha_{(i,k)} \notin \Lambda_{n,r}(\textbf{m}),
\end{array}
\right.
\end{eqnarray*}

\begin{eqnarray*}
\qquad F_{(i,k)}(m_\mu\cdot h)=
\left\{
\begin{array}{ccc}
q^{-\mu_{i}^{(k)}+1}\bigg(\sum\limits_{y\in X_\mu^{\mu-\alpha_{(i,k)}}}q^{\ell(x)}T_y^*\bigg) m_\mu\cdot h &if&\mu-\alpha_{(i,k)}\in \Lambda_{n,r}(\textbf{m}),\\
0&if&\mu-\alpha_{(i,k)} \notin \Lambda_{n,r}(\textbf{m}),
\end{array}
\right.
\end{eqnarray*}
for any $\mu\in \Lambda_{n,r}(\textbf{m})$ and $h\in \mathcal{H}_{n,r}$, where $h_{+(i,k)}^\mu=\left\{
\begin{array}{cc}
1&(i\neq m_k),\\
L_{N+1}-Q_{k+1}&(i=m_k).
\end{array}
\right.$

For $\lambda\in \Lambda_{n,r}(\textbf{m})$, we define the element $1_\lambda\in {}_R\mathscr{S}_{n,r}$ by
$$1_\lambda(m_\mu\cdot h)=\delta_{\lambda\mu}m_\lambda\cdot h$$
for $\mu\in \Lambda_{n,r}(\textbf{m})$ and $h\in \mathcal{H}_{n,r}$. In addition, we see that $\{1_\lambda|\lambda\in \Lambda_{n,r}(\textbf{m})\}$
is a set of pairwise orthogonal idempotents, and then $1=\sum_{\lambda\in \Lambda_{n,r}(\textbf{m})}1_\lambda$.

\begin{definition}\label{mainbasis}

 For any $m$ and $\mu\in \Lambda_{n,r}(\textbf{m})$, we now define a left principle ideal of cyclotomic $q$-Schur algebra as in the case  $m=1$ in \cite{6}:

 $\mathcal{A}^\mu\triangleq\mathscr{S}_{n,r}\varphi^1_{\mu\omega}T_{w_\mu}y_{\mu'}$ with
$\varphi^1_{\mu\omega}\in \text{Hom}_{\mathcal{H}_{n,r}}(\mathcal {H}_{n,r},M^\mu)$
is defined as $\varphi^1_{\mu\omega}(h):=x_\mu h$ for any
$h\in\mathscr{H}_{n,r}$. Meanwhile, the element $T_{w_\mu}y_{\mu'}$ acts on
$\varphi^1_{\mu\omega}$ induced by the right $\mathscr{H}_{n,r}$-module structure
of $M^\mu$. From now on, the
module $\mathcal {A}^\mu$ is called a \emph{$q$-Schur module}, and
denote the element $\varphi^1_{\mu\omega}T_{w_\mu}y_{\mu'}\in
\mathscr{S}_{n,r}$ by $z_\mu$.
\end{definition}

Recall in  \cite{11} that the
set of all $[\lambda]$ forms a poset $\Lambda[m,r]$ (where $m=\sum_i a_i$) which has
the same set $\Lambda(m,r)$ as all compositions of $m$
with at most $r$ parts but with different order.
Partial ordering on $\Lambda[m,r]$ is given by $\preceq:$
$[a_i]\preceq [b_i]$ if $a_i\leq b_i$ for all $i=1,\ldots,r$, while
$\Lambda(m,r)$ has the usual dominance order $\unlhd$.

The following results will be useful in the sequel (see
(2.8), (3.1), (3.4) in \cite{11}).
\begin{lemma}\cite{11}\label{lemdu}\;
Let $\textbf{a}, \textbf{b}\in \Lambda[m,r]$, and note $\mathcal{H}(\mathfrak{S}_n)$ as the Iwahori-Hecke algebra associated with $\mathfrak{S}_n$.
\begin{eqnarray}
 &&u^+_{\textbf{a}}\mathcal{H}_{n,r}u^-_{\textbf{b}'}=0\text{
unless }\textbf{a}\preceq\textbf{b}.\\
 &&u^+_{\textbf{a}}\mathcal
{H}(\mathfrak{S}_n)u^-_{\textbf{a}'}=\mathcal
{H}(\mathfrak{S}_{\textbf{a}})v_{\textbf{a}},\qquad  \text{where }\
v_{\textbf{a}}=u^+_{\textbf{a}}T_{w_{\textbf{a}}}u^-_{\textbf{a}'}.\\
&&u^+_{\textbf{a}}\mathcal{H}_{n,r}u^-_{\textbf{a}'}=u^+_{\textbf{a}}\mathcal
{H}(\mathfrak{S}_n)u^-_{\textbf{a}'}.\\
&&v_{\textbf{a}}\mathcal{H}_{n,r}\text{ is  a free } R\text{-submodule with basis } \{v_{\textbf{a}}T_w|w\in\mathfrak{S}_n\}.
\end{eqnarray}
\end{lemma}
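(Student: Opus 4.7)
The plan is to treat the four claims as an interlocking package: establish (2.5) first, obtain (2.7) as its corollary, derive (2.6) from the trivial intersection property (2.1), and finally deduce (2.8) as a freeness statement. Throughout, the main technical engine is the Ariki--Koike PBW-type basis $\{L_1^{c_1}\cdots L_n^{c_n}T_w : 0\le c_j<r,\ w\in\mathfrak{S}_n\}$ of $\mathcal{H}_{n,r}$ together with the Jucys--Murphy commutation relations $T_iL_j=L_jT_i$ for $j\neq i,i+1$ and $T_iL_iT_i=qL_{i+1}$, which let me push polynomials in the $L_j$'s past arbitrary $T_w$ at the cost of strictly length-decreasing correction terms.

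For (2.5), I would expand an arbitrary $h\in\mathcal{H}_{n,r}$ in the PBW basis and analyse $u^+_{\textbf{a}}\,L_1^{c_1}\cdots L_n^{c_n}T_w\,u^-_{\textbf{b}'}$ by commuting $u^-_{\textbf{b}'}$ leftward past $T_w$; this produces a polynomial factor in the $L_j$'s positioned to the left of $T_w$, modulo terms of strictly smaller length. When $\textbf{a}\not\preceq\textbf{b}$, a direct count of linear factors $(L_j-Q_k)$ contributed by $u^+_{\textbf{a}}$ and by the transported $u^-_{\textbf{b}'}$ shows that for some index $j$ the combined polynomial contains the entire minimal polynomial $\prod_{k=1}^{r}(L_j-Q_k)=0$ of the Jucys--Murphy element, forcing the product to vanish; induction on $\ell(w)$ disposes of the lower-order leakage. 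Claim (2.7) then follows at once: for $\textbf{b}=\textbf{a}$ the degree count is saturated with equality, so every $L_j^{c_j}$ can be absorbed into $u^+_{\textbf{a}}$ or $u^-_{\textbf{a}'}$ without activating the minimal polynomial, reducing the product to an element of $u^+_{\textbf{a}}\mathcal{H}(\mathfrak{S}_n)u^-_{\textbf{a}'}$.

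For (2.6), the inclusion $\supseteq$ is immediate from the definition of $v_{\textbf{a}}=u^+_{\textbf{a}}T_{w_{\textbf{a}}}u^-_{\textbf{a}'}$ and the fact that $\mathcal{H}(\mathfrak{S}_{\textbf{a}})$ slides past $u^+_{\textbf{a}}$; for the reverse, I would use the double-coset decomposition $\mathfrak{S}_n=\bigsqcup_{d\in\mathscr{D}_{\textbf{a}\textbf{a}'}}\mathfrak{S}_{\textbf{a}}\,d\,\mathfrak{S}_{\textbf{a}'}$ and invoke (2.1): for every $d\neq w_{\textbf{a}}$ the intersection $d^{-1}\mathfrak{S}_{\textbf{a}}d\cap\mathfrak{S}_{\textbf{a}'}$ is non-trivial, and the resulting full parabolic sum appearing inside $u^+_{\textbf{a}}T_du^-_{\textbf{a}'}$ cancels against the alternating-sign pattern built into $u^-_{\textbf{a}'}$, leaving only the $d=w_{\textbf{a}}$ contribution and hence exactly $\mathcal{H}(\mathfrak{S}_{\textbf{a}})v_{\textbf{a}}$. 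Finally, (2.8) follows by running the absorb-the-$L_j$-factors argument from (2.7) on the right: any $v_{\textbf{a}}h$ reduces to an $R$-combination of the $v_{\textbf{a}}T_w$, and their linear independence is inherited from freeness of $\mathcal{H}_{n,r}$ once one observes that the PBW leading term of $v_{\textbf{a}}T_w$ is $\prod_j L_j^{\gamma_j}T_{w_{\textbf{a}}w}$ for an explicit exponent vector $\gamma$ depending only on $\textbf{a}$, so distinct $w$ produce distinct leading $T$-components. The principal obstacle throughout lies in (2.5): bookkeeping the length-decreasing terms produced at each commutation and confirming that they too carry the fatal minimal-polynomial factor, where the cumulative sequence $a_0\le a_1\le\cdots\le a_r$ is precisely what makes the degree counting align with the dominance condition $\textbf{a}\preceq\textbf{b}$.
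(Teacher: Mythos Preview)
The paper does not prove this lemma at all: it is quoted verbatim from Du--Rui \cite{11}, with the sentence immediately preceding it pointing to (2.8), (3.1), (3.4) there. So there is no ``paper's own proof'' to compare against; any assessment has to be on the merits of your sketch itself.

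Your outline for (2.5), (2.7), and (2.8) follows the standard Du--Rui strategy and is fine in principle. The real problem is your argument for (2.6). You write that for a double-coset representative $d\neq w_{\textbf{a}}$ the non-trivial intersection $d^{-1}\mathfrak{S}_{\textbf{a}}d\cap\mathfrak{S}_{\textbf{a}'}$ produces a ``full parabolic sum'' which ``cancels against the alternating-sign pattern built into $u^-_{\textbf{a}'}$''. But $u^-_{\textbf{a}'}$ has no alternating-sign pattern: by Definition~\ref{m>1} it is the product $\pi_{a'_1}(Q_{r-1})\cdots\pi_{a'_{r-1}}(Q_1)$ of Jucys--Murphy factors $(L_j-Q_k)$, not a signed sum $\sum_{w}(-q)^{-\ell(w)}T_w$ over a parabolic subgroup. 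You appear to be importing the mechanism from the classical $x_\lambda T_d y_{\lambda'}=0$ argument in the Iwahori--Hecke setting, where $x_\lambda$ and $y_{\lambda'}$ really are the symmetric and antisymmetric parabolic idempotents; that mechanism does not apply here. The correct argument for (2.6) proceeds instead by analysing how $u^-_{\textbf{a}'}$ commutes past $T_d$ in terms of the $L_j$'s and then invoking (2.5)-type degree counting together with the precise combinatorics of $w_{\textbf{a}}$, not by a sign cancellation. As written, your step for (2.6) is a genuine gap.
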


\begin{definition}\cite{12}~ \label{defofw}
For $\lambda\in\Lambda^+_{n,r}(\textbf{m})$ and $\mu\in\Lambda_{n,r}(\textbf{m})$, a
$\lambda$-tableau of type $\mu$ denoted as $T$ is said
to be {\bf semistandard} if the following hold:

(i)  the entries in each row of each component of $T^{(k)}$ of $T$
are
non-decreasing;

(ii)  the entries in each column of each component $T^{(k)}$ of $T$
are strictly increasing;

(iii) if $(a,b,c)\in \lambda$, and $T(a,b,c)=(i,s)$ then $s\geq
c$.\\
Let $\mathcal {T}_\mu^{ss}(\lambda)$ be the set of semistandard
$\lambda$-tableau of type $\mu$ and denote $\mathcal
{T}^{ss}_\Lambda(\lambda)=\bigcup\limits_{\mu\in\Lambda}\mathcal{T}^{ss}_\mu(\lambda)$.
\end{definition}

The set
\begin{eqnarray}
\{\Psi_{ST}|S,T\in \mathcal {T}^{ss}_\Lambda(\lambda),\  \lambda\in
\Lambda^+(n,r)\},
\end{eqnarray}
 which is called the \emph{semi-standard} basis of cyclotomic $q$-Schur algebras in \cite{8},
forms a cellular basis of $\mathscr {S}_{n,r}$ in the sense of \cite{13}. Let $\mathscr
{S}_{n,r}^{\rhd\lambda}$ be the two sides ideal of $\mathscr{S}_{n,r}$ spanned by all
$\Psi_{ST}$, where $S,T\in \mathcal{T}_\Lambda^{ss}(\mu)$ and $\mu\rhd\lambda$
(i.e., $\mu:=\text{shape}(S)=\text{shape}(T)\rhd\lambda$), where shape($T$) means the partition corresponding to tableaux $T$.

In particular, let $\lambda\in\Lambda^+(n,r)$ be a partition and
recall that $T^{\lambda}$ is the unique semistandard $\lambda$-tableau of type
$\lambda$ (see \cite{8} and \cite{7}). From the definition, one sees that $\Psi_{T^\lambda
T^\lambda}$ can restrict to the identity map on $M_\lambda$, and sometimes we denote it by
$\Psi_{\lambda}$ .

With above notations, we can define the ``cell module" as a
submodule of $\mathscr {S}_{n,r}/\mathscr {S}_{n,r}^{\rhd\lambda}$:
\begin{eqnarray}\label{weylm}
W^\lambda=\mathscr{S}_{n,r}\bar{\Psi}_\lambda, \qquad  where\
\bar{\Psi}_\lambda:=(\mathscr{S}_{n,r}^{\rhd\lambda}+\Psi_\lambda)/\mathscr{S}_{n,r}^{\rhd\lambda}.
\end{eqnarray}
The module $W^\lambda$ is called a {\em Weyl module} in \cite{8}.

\section{\bf Main theorem and its proof}

We now prove $q$-Schur module given above is isomorphic to those in \cite{8} as ``cell modules" when $\lambda\in
\Lambda^+_{n,r}(\textbf{m})$. Recall the definitions given in \ref{defofw}.

\begin{theorem}\label{mtsec2.2}
For each $\lambda\in\Lambda_{n,r}^+(\textbf{m})$, we have the following $\mathscr {S}_{n,r}$-module
isomorphism:
$$\mathcal {A}^\lambda\cong W^{\lambda}.$$
\end{theorem}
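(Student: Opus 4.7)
The plan is to build an explicit $\mathscr{S}_{n,r}$-module surjection $\bar\phi : W^\lambda \twoheadrightarrow \mathcal{A}^\lambda$ sending $\bar\Psi_\lambda$ to $z_\lambda$, and then establish injectivity by matching ranks. First, observe that $\Psi_\lambda = \Psi_{T^\lambda T^\lambda}$ is the identity on $M^\lambda$ and zero on every other $M^\mu$, hence it coincides with the orthogonal idempotent $1_\lambda$. Since the image of $z_\lambda$ lies in $M^\lambda$, we have $\Psi_\lambda z_\lambda = z_\lambda$, so $\mathcal{A}^\lambda = \mathscr{S}_{n,r}\Psi_\lambda \cdot z_\lambda$ and the assignment $\phi : \mathscr{S}_{n,r}\Psi_\lambda \to \mathcal{A}^\lambda$, $\xi \mapsto \xi z_\lambda$, is an $\mathscr{S}_{n,r}$-module surjection. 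Under the identification $\Hom_{\mathcal{H}_{n,r}}(\mathcal{H}_{n,r},M^\lambda)\cong M^\lambda$ by evaluation at $1$, the generator $z_\lambda$ corresponds to $x_\lambda T_{w_\lambda}y_{\lambda'} = u^+_{[\lambda]} m_{\overline\lambda} T_{w_\lambda} m_{\overline{\lambda'}} u^-_{[\lambda']} \in u^+_{[\lambda]}\mathcal{H}_{n,r}u^-_{[\lambda']}$.

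The main technical point is to show $\mathscr{S}_{n,r}^{\rhd\lambda}\cdot z_\lambda = 0$, which then lets $\phi$ descend to the desired $\bar\phi$. Using the cellular description, $\mathscr{S}_{n,r}^{\rhd\lambda}$ is spanned by the basis elements $\Psi_{ST}$ with $\text{shape}(S)=\text{shape}(T)=\mu \rhd \lambda$; since $z_\lambda$ has target $M^\lambda$, only those $\Psi_{ST}$ with $T$ of type $\lambda$ can yield a non-trivial composition. Unpacking such a $\Psi_{ST}$ through its standard description in terms of $u^+_{[\mu]}$-prefactors, the composition $\Psi_{ST}\cdot z_\lambda$ produces an element of $u^+_{[\mu]}\mathcal{H}_{n,r}u^-_{[\lambda']}$. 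The dominance inequality $\mu \rhd \lambda$ on multipartitions translates, via the map $\zeta$ and comparison of partial sums (together with the trivial-intersection identity $(2.1)$ for $w_\lambda$), into $[\mu]\not\preceq[\lambda]$ inside $\Lambda[m,n]$, and the vanishing statement $(3.1)$ of Lemma \ref{lemdu} then forces the product to be zero.

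With $\bar\phi$ in hand, injectivity is handled by a rank argument. The cell module $W^\lambda$ is $R$-free with basis indexed by $\mathcal{T}^{ss}_\Lambda(\lambda)$ (the standard basis from \cite{8}), and $\bar\phi$ carries these basis elements to the family $\{\varphi^{1_A}_{\mu\lambda}z_\lambda : A\in\mathcal{T}^{ss}_\mu(\lambda),\,\mu\in\Lambda_{n,r}(\textbf{m})\}$ in $\mathcal{A}^\lambda$. Evaluating this family at $1\in\mathcal{H}_{n,r}$ yields expressions of the form (coset sum)$\cdot v_{[\lambda]}\cdot m_{\overline{\lambda'}}$ whose $R$-linear independence in $\mathcal{H}_{n,r}$ follows from the freeness statement $(3.4)$ of Lemma \ref{lemdu} (namely, $v_{[\lambda]}\mathcal{H}_{n,r}$ is $R$-free on $\{v_{[\lambda]}T_w\}_{w\in\mathfrak{S}_n}$). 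A surjection sending a basis to a linearly independent family is an isomorphism, completing the proof.

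The main obstacle I expect is the dominance translation in the second paragraph: concretely interpreting each cellular basis element $\Psi_{ST}$ as a composition of $u^+_{[\mu]}$-type factors so that the vanishing machinery of Lemma \ref{lemdu} becomes directly applicable, and then bookkeeping how the presence of the braid-type element $T_{w_\lambda}$ interacts with the two different orderings (dominance on $\Lambda^+_{n,r}(\textbf{m})$ versus $\preceq$ on $\Lambda[m,n]$) during the passage. Once this translation is secured, the remainder of the proof is routine application of Lemma \ref{lemdu}.
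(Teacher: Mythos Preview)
Your overall architecture matches the paper's, but the central claim in your second paragraph is false: $\mu\rhd\lambda$ on multipartitions does \emph{not} force $[\mu]\not\preceq[\lambda]$. For instance $\lambda=((1,1),(1))$ and $\mu=((2),(1))$ satisfy $\mu\rhd\lambda$ yet $[\mu]=[2,3]=[\lambda]$. Dominance on multipartitions is strictly finer than the $\preceq$ order on size vectors, and the trivial-intersection identity $(2.1)$ does nothing to bridge this gap, so $(2.5)$ of Lemma~\ref{lemdu} alone cannot annihilate $\mathscr{S}_{n,r}^{\rhd\lambda}\cdot z_\lambda$. The paper's argument is a genuine two-stage reduction that you have collapsed into one. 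Stage one: from $\Psi_{ST}z_\lambda\neq0$ with $\mathrm{shape}(T)=\mu\unrhd\lambda$, the vanishing $(2.5)$ gives only $[\mu]\preceq[\lambda]$, hence $[\mu]=[\lambda]$. Stage two is the real content: once $[\mu]=[\lambda]$, identities $(2.6)$--$(2.7)$ place $u^+_{[\mu]}T_{d(\mathfrak{t})}T_{w_\lambda}u^-_{[\lambda']}$ inside $\mathcal{H}(\mathfrak{S}_{[\mu]})v_{[\mu]}$, after which the whole expression factorises componentwise into products $x_{\mu^{(i)}}h_i'\,y_{\lambda^{(i)}{}'}$ living in the level-one Hecke blocks. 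Non-vanishing of each factor forces $\lambda^{(i)}\unrhd\mu^{(i)}$ by the classical Dipper--James result \cite{10}~(4.1); combined with $\mu^{(i)}\unrhd\lambda^{(i)}$ (from $\mu\unrhd\lambda$ and $[\mu]=[\lambda]$, via \cite{9}) this finally yields $\mu=\lambda$. This appeal to the level-one theory is the missing idea in your proposal.

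Your third paragraph is also too quick. The $R$-linear independence of $\{\varphi_{\mu\lambda}^{1_A}z_\lambda\}$ does not fall out of $(2.8)$: evaluation at $1$ produces linear combinations of many $v_{[\lambda]}T_w$'s rather than single basis vectors, so their independence is far from immediate. In the paper this is established separately (Theorem~\ref{mtsec2.1}) by a triangularity argument on the $\chi$-matrices of Definition~\ref{chi} via Lemma~\ref{lemmas2}, and that theorem in fact \emph{invokes} Theorem~\ref{mtsec2.2} to know the rank in advance, rather than the other way around.
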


\begin{proof}
Consider the epimorphism:
\begin{eqnarray*}
\theta:\mathscr {S}_{n,r}\Psi_{\lambda}\longrightarrow \mathscr
{S}_{n,r}z_\lambda;\quad h\Psi_{\lambda}\mapsto
hz_\lambda=h\varphi_{\lambda\omega}^1T_{w_\lambda}y_{\lambda'}=h\varphi_{\bar{\lambda}\omega}^1\cdot
T_{w_{(1)}\cdots w_{(r)}}
y_{\mu^{(1)}{}'\vee\cdots\vee\mu^{(r)}{}'}\cdot v_{[\mu]}.
\end{eqnarray*}
Suppose that $T\in\mathcal {T}^{ss}_\lambda(\mu)$ and $S\in\mathcal
{T}^{ss}_\nu(\mu)$ with $\mu\in\Lambda_{n,r}(\textbf{m})$ and
$\nu\in\Lambda_{n,r}(\textbf{m})$. By the definition of $\Psi_{ST}$ in \cite{8}
and semistandard basis theorem \cite{8} (\textbf{6.6}), we trivially
find that the set $\{\Psi_{ST}|T\in\mathcal {T}_\lambda^{ss}(\mu),\
S\in\mathcal {T}_\nu^{ss}(\mu)\  with\
\mu\unrhd\lambda, \mu\in\Lambda_{n,r}^+(\textbf{m}), \nu\in\Lambda_{n,r}(\textbf{m})\}$ is
a $R$-basis of $\mathscr {S}_{n,r}\Psi_\lambda$. More precisely, we can
write this basis as
\begin{eqnarray}\label{vbasis}
 \{\Psi_{TT^{\lambda}}|T\in
\mathcal{T}_\nu^{ss}(\lambda)\}\cup\{\Psi_{ST}|T\in\mathcal{T}_\lambda^{ss}(\mu)\
 and\  S\in\mathcal{T}_\nu^{ss}(\mu)\  with\  \mu\rhd\lambda\}.
\end{eqnarray}

Then, obviously, we have that $$W^{\lambda}\cong\mathscr
{S}_{n,r}\Psi_\lambda/(\mathscr {S}_{n,r}\Psi_\lambda\cap\mathscr
{S}_{n,r}^{\rhd\lambda}).$$

We claim that, with $\mu\unrhd\lambda$ and $\lambda\in\Lambda_{n,r}^+(\textbf{m})$,
$\nu\in\Lambda_{n,r}(\textbf{m})$, if
  $\theta(\Psi_{ST})=\theta(\Psi_{ST}\Psi_{T^{\lambda}T^{\lambda}})
=\Psi_{ST}\varphi_{\lambda\omega}^1T_{w_\lambda}y_{\lambda'}\neq0$
,then $\mu=\lambda$.

 Consider the action on the unit of $\mathcal{H}_{n,r}$:
\begin{eqnarray*}
\Psi_{ST}\varphi_{\lambda\omega}^1T_{w_\lambda}y_{\lambda'}(1)&=&m_{ST}T_{w_\lambda}y_{\lambda'}\\
&=&\sum\limits_{\substack{\mathfrak{t}\in
\text{Std}(\mu)\\
\lambda(\mathfrak{t})=T}}m_{S\mathfrak{t}}T_{w_\lambda}y_{\lambda'}=\sum\limits_{\substack{\mathfrak{t}\in
\text{Std}(\mu)\\
\lambda(\mathfrak{t})=T}}\sum\limits_{\substack{\mathfrak{s}\in
\text{Std}(\mu)\\
\nu(\mathfrak{s})=S}}m_{\mathfrak{s}\mathfrak{t}}T_{w_\lambda}y_{\lambda'}\\
&=&\sum\limits_{\mathfrak{s},\mathfrak{t}}T_{d(\mathfrak{s})}x_\mu
T_{d(\mathfrak{t})}T_{w_\lambda}y_{\lambda'}=\sum\limits_{\mathfrak{s},\mathfrak{t}}T_{d(\mathfrak{s})}x_{\bar{\mu}}u^+_{[\mu]}
T_{d(\mathfrak{t})}T_{w_\lambda}u^-_{[\lambda']}y_{\bar{\lambda}'}\\&=&(*).
\end{eqnarray*}
Recall that by Lemma \ref{lemdu},
$u^+_{\textbf{a}}\mathcal{H}_{n,r}u^-_{\textbf{b}'}=0$ unless
$\textbf{a}\preceq\textbf{b}$.
$\Psi_{ST}\varphi_{\lambda\omega}^1T_{w_\lambda}y_{\lambda'}\neq0$
implies that for some $\mathfrak{s}$ and $\mathfrak{t}$ above, that
$T_{d(\mathfrak{s})}x_{\bar{\mu}}u^+_{[\mu]}
T_{d(\mathfrak{t})}T_{w_\lambda}u^-_{[\lambda']}y_{\bar{\lambda}'}\neq0$.
Thus, this condition shows that $[\mu]\preceq [\lambda]$. On the other hand, with
the assumption in above claim, i.e., $\mu\unrhd\lambda$, it is obviously that $[\mu]\succeq[\lambda]$ by the definition of $[\mu]$, $[\lambda]$ and $\unrhd$, $\succeq$ .
So $[\mu]=[\lambda]$. Then we find
\begin{eqnarray*}
(*)&=&\sum\limits_{\substack{\mathfrak{s},\mathfrak{t} \\
[\mu]=[\lambda]}}T_{d(\mathfrak{s})}x_{\bar{\mu}}u^+_{[\mu]}
T_{d(\mathfrak{t})}T_{w_\lambda}u^-_{[\mu]'}y_{\bar{\lambda}'}\\
&=&\sum\limits_{\substack{\mathfrak{s},\mathfrak{t} \\
[\mu]=[\lambda]\\
h'\in\mathfrak{S}_{[\mu]}}}T_{d(\mathfrak{s})}x_{\bar{\mu}}h'v_{[\mu]}y_{\bar{\lambda}'}\qquad\qquad \text{by (2.6) and (2.7) in Lemma \ref{lemdu}}\\
&=&\sum\limits_{\substack{\mathfrak{s},\mathfrak{t} \\
[\mu]=[\lambda]\\
h'_i\in\mathfrak{S}_{\{|\lambda_{i-1}|+1,\cdots,|\lambda_i|\}}}}T_{d(\mathfrak{s})}x_{\mu^{(1)}\vee\cdots\vee\mu^{(r)}}h'_1\cdots h'_my_{\lambda^{(1)}{}'\vee\cdots\vee\lambda^{(r)}{}'}v_{[\mu]}\qquad \qquad\text{by \cite{9}}\\
&=&\sum\limits_{\substack{\mathfrak{s},\mathfrak{t} \\
[\mu]=[\lambda]\\
h'_i\in\mathfrak{S}_{\{|\lambda_{i-1}|+1,\cdots,|\lambda_i|\}}}}T_{d(\mathfrak{s})}(x_{\mu^{(1)}}h'_1y_{\lambda^{(1)}{}'})\cdots (x_{\mu^{(r)}}h'_my_{\lambda^{(r)}{}'})v_{[\mu]}\quad .\\
\end{eqnarray*}
Since $[\lambda]=[\mu]$, the fact that this is non-zero implies, by \cite{10}
(\textbf{4.1}), $\lambda^{(i)}\unrhd\mu^{(i)}$ for all
$i=1,\ldots,r$. On the other hand, by \cite{9} (\textbf{1.6}),
$\mu\unrhd\lambda$ and $[\mu]=[\lambda]$ implies
$\mu^{(i)}\unrhd\lambda^{(i)}$, with $1\leq i\leq r$. Hence
$\mu^{(i)}=\lambda^{(i)}$ for all $i$, and therefore, $\mu=\lambda$.
This completes the proof of above claim.

By the claim and the display in (\ref{vbasis}), one see that
$$ker\theta=\{\Psi_{ST}~|~T\in\mathcal{T}_\lambda^{ss}(\mu) \text{ and}\  S\in\mathcal{T}_\nu^{ss}(\mu) \text{  with}\  \mu\rhd\lambda\}
=\mathscr {S}_{n,r}\Psi_\lambda\cap\mathscr {S}_{n,r}^{\rhd\lambda}.$$
Therefore, $\mathcal {A}^\lambda\cong W^{\lambda}$.
\end{proof}

\begin{definition}\cite{10}~~\label{pair}
For $w\in\mathfrak{S}_n$ and $S\in\mathcal {T}_\lambda(\mu)$ with
$\lambda,\mu\in\Lambda(n,r)$,  define a map
\begin{eqnarray}\label{biformcom}
\mathfrak{S}_n\times\mathcal
{T}_\lambda(\mu)\longrightarrow\mathscr{D}_\lambda
\qquad \qquad (w,S)\longmapsto w_S
\end{eqnarray}
where the element $w_S$ is defined by the row-standard
$\lambda$-tableau $\mathfrak{t}^\lambda w_S$ for which $i$ belongs
to the row $a$ if the place occupied by $i$ in $\mathfrak{t}^\mu w$ is
occupied by $a$.
\end{definition}

For example,
$S=\tiny\young(123,12)$ and $\mathfrak{t}^\mu w=\tiny\young(124,35)$
with $\mu=(3,2)$ and $\lambda=(2,2,1)$, then
$\mathfrak{t}^\lambda w_S=\tiny\young(13,25,4)$.

\begin{remark}\label{redefinition}
Let $\mathcal{T}^{ss}_\lambda(\mu)$ be the set of all
semi-standard $\mu$-tableaux of type $\lambda$, with
$\lambda\text{ and }\mu\in\Lambda_{n,r}(\textbf{m})$. For any
$S\in\mathcal{T}^{ss}_\lambda(\mu)$, we define $1_S:=1_{\bar{S}}$.
Since $S$ is a semi-standard $\mu$-tableau of type $\lambda$, it implies that $\bar{S}$ is
a row-standard $\bar{\mu}$-tableau of type $\bar{\lambda}$, as in
\cite{1}.

\end{remark}

We compare the definition of
semi-standard tableaux which appears in \cite{8} with that in \cite{1}. Note that every entry in $S$ is written as the
symbol $(i,j)$ and is replaced by $i+\sum_{k=1}^{j-1}m_k$, for $1\leq i\leq m_j$, $1\leq j\leq n$.

Then, by the definition above, we obtain the following
consequence:
\begin{lemma}\label{lemmas2}

Suppose that $u\in \mathfrak{S}_r$ and
$w\in\mathfrak{S}_{\mu^{(1)'}\vee\cdots\vee\mu^{(r)'}}$, with
$\lambda,\mu\in\Lambda_{n,r}(\textbf{m})$. Then
$\varphi^1_{\bar{\lambda}\omega}T_uT_w$ is a linear combination of
terms $\varphi^d_{\bar{\lambda}\omega}$
($d\in\mathscr{D}_{\bar{\lambda}}$) for which
$\chi(t^{\bar{\lambda}}d,t^{\bar{\mu}}w_{(1)}\cdots
w_{(r)})=\chi(t^{\bar{\lambda}}u,t^{\bar{\mu}}w_{(1)}\cdots
w_{(r)})$.
\end{lemma}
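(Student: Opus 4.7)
The plan is to imitate the argument of Dipper--James (Lemma 4.3 of \cite{10}), extended to the multipartition setting after first absorbing the $\mathfrak{S}_{\bar{\lambda}}$-part of $u$ into $x_{\bar{\lambda}}$.

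First, write $u=vd_u$ with $v\in\mathfrak{S}_{\bar{\lambda}}$ and $d_u\in\mathscr{D}_{\bar{\lambda}}$, so that $T_u=T_vT_{d_u}$. The standard identity $m_{\bar{\lambda}}T_v=c_v\,m_{\bar{\lambda}}$ for some scalar $c_v\in R$ (coming from the Hecke relations in $\mathcal{H}(\mathfrak{S}_{\bar{\lambda}})$), together with $x_{\bar{\lambda}}=u^+_{[\lambda]}m_{\bar{\lambda}}$, yields $\varphi^1_{\bar{\lambda}\omega}T_u=c_v\,\varphi^{d_u}_{\bar{\lambda}\omega}$. By (2.3) of Remark \ref{threeproper} the target chi-matrix is unchanged when $u$ is replaced by $d_u$, so the problem reduces to the case $u=d_u\in\mathscr{D}_{\bar{\lambda}}$. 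Then induct on $\ell(w)$: write $w=w's$ with $s$ a simple reflection in $\mathfrak{S}_{\mu^{(1)'}\vee\cdots\vee\mu^{(r)'}}$ and $\ell(w)=\ell(w')+1$; by the inductive hypothesis it suffices to verify that for every $d\in\mathscr{D}_{\bar{\lambda}}$ the product $\varphi^d_{\bar{\lambda}\omega}T_s=x_{\bar{\lambda}}T_dT_s$ lies in the $R$-span of those $\varphi^{d'}_{\bar{\lambda}\omega}$ with $\chi(\mathfrak{t}^{\bar{\lambda}}d',\mathfrak{t}^{\bar{\mu}}W)=\chi(\mathfrak{t}^{\bar{\lambda}}d,\mathfrak{t}^{\bar{\mu}}W)$, where $W:=w_{(1)}\cdots w_{(r)}$.

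Next I would split into two cases. If $ds\in\mathscr{D}_{\bar{\lambda}}$, the Hecke quadratic relation expands $T_dT_s$ as $T_{ds}$ or $T_{ds}+(q-q^{-1})T_d$, according to whether $\ell(ds)>\ell(d)$. Otherwise $ds=s'd$ for some simple reflection $s'\in\mathfrak{S}_{\bar{\lambda}}$, and $x_{\bar{\lambda}}T_dT_s=x_{\bar{\lambda}}T_{s'}T_d$ collapses to a scalar multiple of $\varphi^d_{\bar{\lambda}\omega}$, for which the chi condition is automatic. In the first case only $\varphi^d_{\bar{\lambda}\omega}$ and $\varphi^{ds}_{\bar{\lambda}\omega}$ appear; by property (2.2), $\chi(\mathfrak{t}^{\bar{\lambda}}ds,\mathfrak{t}^{\bar{\mu}}W)=\chi(\mathfrak{t}^{\bar{\lambda}}d,\mathfrak{t}^{\bar{\mu}}Ws)$, so the task reduces to showing the chi invariance $\chi(\mathfrak{t}^{\bar{\lambda}}d,\mathfrak{t}^{\bar{\mu}}Ws)=\chi(\mathfrak{t}^{\bar{\lambda}}d,\mathfrak{t}^{\bar{\mu}}W)$.

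The main obstacle is exactly this chi invariance, because $\mathfrak{S}_{\mu^{(1)'}\vee\cdots\vee\mu^{(r)'}}$ is strictly smaller than $\mathfrak{S}_{\bar{\mu}'}$, so (2.4) of Remark \ref{threeproper} is not directly applicable to $\mathfrak{t}^{\bar{\mu}}W$. The resolution is combinatorial: by construction of $W=w_{(1)}\cdots w_{(r)}$, each column of each component of $\mathfrak{t}^{\bar{\mu}}W$ is filled with consecutive integers (the entries within the block $[\,1+\sum_{j<k}|\mu^{(j)}|,\,\sum_{j\leq k}|\mu^{(j)}|\,]$ appear in column-reading order in the $k$-th subtableau), and every orbit of the Young subgroup $\mathfrak{S}_{\mu^{(1)'}\vee\cdots\vee\mu^{(r)'}}$ is contained in a single such column. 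Hence right-multiplication by $s$ permutes entries within one column of $\mathfrak{t}^{\bar{\mu}}W$; the set of entries in each column is preserved, and $\chi(\cdot,\mathfrak{t}^{\bar{\mu}}Ws)=\chi(\cdot,\mathfrak{t}^{\bar{\mu}}W)$ follows. This identification of $\mathfrak{S}_{\mu^{(1)'}\vee\cdots\vee\mu^{(r)'}}$-orbits with sub-blocks of the columns of $\mathfrak{t}^{\bar{\mu}}W$ is the delicate extra ingredient beyond the classical Dipper--James argument.
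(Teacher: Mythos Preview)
Your proof is correct and follows essentially the same route as the paper's: both argue by induction on $\ell(w)$, reduce to analysing $\varphi^d_{\bar{\lambda}\omega}T_a$ for a simple reflection $a$ via the Dipper--James three-case formula, and then verify the $\chi$-condition case by case. Your write-up is simply more explicit than the paper's on two points --- the preliminary reduction to $u\in\mathscr{D}_{\bar{\lambda}}$ and the combinatorial reason why $s$ preserves the column contents of $\mathfrak{t}^{\bar{\mu}}W$ --- but these are elaborations of the same argument, not a different method.
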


\begin{proof}
 The conclusion is ture when $w=1$ since
$\varphi_{\bar{\lambda}\omega}^1T_u=\varphi_{\bar{\lambda}\omega}^u$
for some $u\in\mathfrak{S}_n$.
Below we assume that $w\neq 1$.

For
some $w'\in\mathfrak{S}_n$ and some
$a=(i,i+1)\in\mathfrak{S}_{\mu^{(1)'}\vee\cdots\vee\mu^{(r)'}}$, we
have that $w=w'a$, and without lose generality, we can set
$(i,i+1)\in \mathfrak{S}_{\mu^{(1)'}}$ satisfying:
\begin{eqnarray*}
 w'=w'_1\cdots
w'_r,\   w=w_1\cdots w_r\quad  \text{with}\quad  w_1'(i,i+1)&=&w_1,\\
 w_i&=&w_i'\quad
for\ i=2,\cdots,r.
\end{eqnarray*}

By induction on length $\ell(w)$, we have $\varphi^1_{\bar{\lambda}\omega}T_uT_{w'}$ as
a linear combination of terms $\varphi_{\bar{\lambda}\omega}^d$
($d\in\mathscr{D}_{\bar{\lambda}}$) for which
$\chi(t^{\bar{\lambda}}d,t^{\bar{\mu}}w_{(1)}\cdots
w_{(r)})=\chi(t^{\bar{\lambda}}u,t^{\bar{\mu}}w_{(1)}\cdots
w_{(r)})$.

Consider
$$\varphi_{\bar{\lambda}\omega}^1T_uT_w=\varphi_{\bar{\lambda}\omega}^1T_uT_wT_a=\sum\limits_{\chi(t^{\bar{\lambda}}d,t^{\bar{\mu}}w_{(1)}\cdots
w_{(r)})=\chi(t^{\bar{\lambda}}u,t^{\bar{\mu}}w_{(1)}\cdots
w_{(r)})}C_d\varphi_{\bar{\lambda}\omega}^dT_a.$$ By \cite{6}
or \cite{10}, we have
\begin{equation}
\varphi_{\bar{\lambda}\omega}^dT_a=\left\{
\begin{array}{cc}
q\varphi_{\bar{\lambda}\omega}^d& \text{if } i,i+1 \text { belong  to  the  same  row  of}\  t^{\bar{\lambda}}d,\\
\varphi_{\bar{\lambda}\omega}^{da}&\qquad\qquad \text{if  the  row  index  of}\  i\ \text{ in}\  t^{\bar{\lambda}}\  \text{is  less  than  that  of}\  i+1,\\
q\varphi_{\bar{\lambda}\omega}^{da}+(q-1)\varphi_{\bar{\lambda}}\varphi_{\bar{\lambda}\omega}^d&
\text{otherwise}.
\end{array}
\right.
\end{equation}
Then the proof is completed through checking  the formula above
case by case.
\end{proof}

By the definition in Remark \ref{redefinition}, we can show the following theorem on basis, which is the main result in this paper.
\begin{theorem}\label{mtsec2.1}
Suppose that $\lambda\in\Lambda_{n,r}^+(\textbf{m})$. Then the $q$-Schur module
$\mathcal {A}^\lambda$ is free as an $R$-module and
$\{\varphi^{1_A}_{\mu\lambda}\cdot z_\lambda|A\in \mathcal
{T}_\mu^{ss}(\lambda)\  and\
\mu\in\Lambda_{n,r}(\textbf{m})\}\subseteq\mathcal {A}^\lambda$ is a basis.

\end{theorem}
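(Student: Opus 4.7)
The plan is to combine Theorem~\ref{mtsec2.2}, which identifies $\mathcal{A}^\lambda$ with $W^\lambda$, with a direct spanning argument for the proposed family, and then to upgrade ``spanning'' to ``basis'' by a rank count. By Theorem~\ref{mtsec2.2}, $\mathcal{A}^\lambda\cong W^\lambda$ as $\mathscr{S}_{n,r}$-modules, and the proof of that theorem already records, via the display~(\ref{vbasis}), a free $R$-basis of $W^\lambda$, namely $\{\bar{\Psi}_{TT^\lambda} : T\in\mathcal{T}_\nu^{ss}(\lambda),\ \nu\in\Lambda_{n,r}(\mathbf{m})\}$. Transporting through the isomorphism $\theta$ of Theorem~\ref{mtsec2.2}, we conclude that $\mathcal{A}^\lambda$ is $R$-free of rank $N:=\sum_{\nu\in\Lambda_{n,r}(\mathbf{m})}|\mathcal{T}_\nu^{ss}(\lambda)|$, which is exactly the cardinality of the set proposed as a basis.

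Next I would show that $\mathcal{B}:=\{\varphi^{1_A}_{\mu\lambda}\cdot z_\lambda : A\in\mathcal{T}_\mu^{ss}(\lambda),\ \mu\in\Lambda_{n,r}(\mathbf{m})\}$ generates $\mathcal{A}^\lambda$ over $R$. Since $z_\lambda\in 1_\lambda\mathscr{S}_{n,r}$, we may write $\mathcal{A}^\lambda=\mathscr{S}_{n,r}z_\lambda=\bigoplus_\mu (1_\mu\mathscr{S}_{n,r}1_\lambda)\cdot z_\lambda$, so it suffices, for each $\mu$, to run $\varphi$ over an $R$-generating set of $\Hom_{\mathcal{H}_{n,r}}(M^\lambda,M^\mu)=1_\mu\mathscr{S}_{n,r}1_\lambda$ and compose with $z_\lambda$. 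By Remark~\ref{redefinition}, such a generating set is indexed by row-standard $\lambda$-tableaux of type~$\mu$, with corresponding elements $\{\varphi^{1_A}_{\mu\lambda}\}$. The key reduction is then: for $A$ row-standard but not semi-standard, one can express $\varphi^{1_A}_{\mu\lambda}\cdot z_\lambda$ as an $R$-linear combination of terms $\varphi^{1_B}_{\mu\lambda}\cdot z_\lambda$ with $B$ strictly more dominant than $A$ in the $\chi$-order of Remark~\ref{threeproper}. I would implement this by pushing a column-violating transposition of $A$ through $\varphi^1_{\lambda\omega}T_{w_\lambda}$ into the $y_{\lambda'}$-factor of $z_\lambda$, where the Young subgroup $\mathfrak{S}_{\overline{\lambda'}}$ sitting inside $y_{\lambda'}$ yields a Garnir-type identity, and then invoking Lemma~\ref{lemmas2} to re-express the outcome as $\sum_B c_B\,\varphi^{1_B}_{\mu\lambda}\cdot z_\lambda$ with $B$'s in the required dominance pattern. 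Iterating in the (finite) $\chi$-order reduces every generator to an $R$-combination of semi-standard ones, so $\mathcal{B}$ spans $\mathcal{A}^\lambda$.

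Finally, the two preceding steps together produce an $R$-linear surjection from the free $R$-module on $\mathcal{B}$ (of rank $N$) onto $\mathcal{A}^\lambda$ (also $R$-free of rank $N$). Over any commutative ring, a surjection between free modules of equal finite rank is an isomorphism, hence $\mathcal{B}$ is an $R$-basis of $\mathcal{A}^\lambda$, proving the theorem. The main obstacle I anticipate is the cyclotomic form of the column-sorting/Garnir reduction in the second step: in passing from the level-$1$ argument of Dipper--James \cite{10} to level~$r$, the extra factor $u^-_{[\lambda']}\cdot v_{[\lambda]}$ hidden inside $z_\lambda$ must be shown to be compatible with column relations in each of the $r$ components, which is exactly where the multi-partition dominance dichotomy $[\mu]=[\lambda]$ and $\mu^{(i)}=\lambda^{(i)}$ used at the end of the proof of Theorem~\ref{mtsec2.2} will have to be invoked.
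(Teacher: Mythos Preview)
Your overall strategy—use Theorem~\ref{mtsec2.2} for the rank count, then prove \emph{spanning}, then conclude via the surjection-of-equal-rank-free-modules trick—is dual to what the paper actually does: the paper instead proves $R$-linear \emph{independence} of the proposed set directly, by a $\chi$-triangularity argument using Remark~\ref{threeproper} and Lemma~\ref{lemmas2}, and combines this with the rank count from Theorem~\ref{mtsec2.2}.

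Your spanning step, however, has a genuine gap at level $r>1$. You claim, citing Remark~\ref{redefinition}, that $\{\varphi^{1_A}_{\mu\lambda}:A\text{ row-standard}\}$ is an $R$-generating set of $1_\mu\mathscr{S}_{n,r}1_\lambda=\Hom_{\mathcal{H}_{n,r}}(M^\lambda,M^\mu)$. That is the level-$1$ Dipper--James fact and it fails cyclotomically: by the semi-standard basis theorem of \cite{8}, $1_\mu\mathscr{S}_{n,r}1_\lambda$ has basis $\{\Psi_{ST}:S\in\mathcal{T}^{ss}_\mu(\nu),\ T\in\mathcal{T}^{ss}_\lambda(\nu),\ \nu\in\Lambda^+_{n,r}\}$, which for $r>1$ is strictly larger than any family indexed by double cosets in $\mathfrak{S}_{\bar\mu}\backslash\mathfrak{S}_n/\mathfrak{S}_{\bar\lambda}$. (Already for $n=1$, $r=2$, $\lambda=\mu=(\emptyset,(1))$ one has $M^\lambda=\mathcal{H}_{1,2}$, so $\End_{\mathcal{H}_{1,2}}(M^\lambda)$ has $R$-rank $2$, while there is a single row-standard tableau.) Hence your Garnir-type straightening never starts from a generating family, and the reduction to semi-standard $A$'s does not establish that $\mathcal{B}$ spans $\mathcal{A}^\lambda$.

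The paper bypasses this issue entirely: it evaluates $\varphi^{1_A}_{\mu\lambda}z_\lambda$ at $1$, uses Lemma~\ref{lemdu} to peel off $v_{[\lambda]}$ and reduce to a level-$1$ expression in $\mathcal{H}(\mathfrak{S}_n)$, and then runs the Dipper--James argument there, showing that each $A$ contributes a leading term $q^K\varphi^{\overline{1_A w_{(1)}\cdots w_{(r)}}}_{\bar\lambda\omega}$ plus $\chi$-lower terms; Lemma~\ref{lemmas2} controls multiplication by $y_{\mu^{(1)'}\vee\cdots\vee\mu^{(r)'}}$. This gives linear independence without ever needing a generating set for the cyclotomic Hom space. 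If you want to rescue your route, the cleanest fix is to invoke the identification $\varphi^{1_A}_{\mu\lambda}=\Psi_{AT^\lambda}$ from \cite{1} (used later in the proof of Proposition~\ref{multi}); then the isomorphism $\theta$ of Theorem~\ref{mtsec2.2} carries the known basis $\{\bar\Psi_{AT^\lambda}\}$ of $W^\lambda$ directly onto $\mathcal{B}$, and no separate spanning argument is needed.
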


\begin{proof}

With the help of Theorem \ref{mtsec2.2}, it is enough to show that $\{\varphi^{1_A}_{\mu\lambda}z_\lambda|A\in \mathcal
{T}_\mu^{ss}(\lambda)\  and\
\mu\in\Lambda_{n,r}(\textbf{m})\}\subseteq\mathcal {A}^\lambda$ is $R$-linearly independent.
We calculate the action of the element
$\varphi_{\lambda\mu}^{1_A}\cdot z_\mu$ on the
unit of $\mathcal{H}_{n,r}$,
\begin{eqnarray*}
\varphi_{\lambda\mu}^{1_A}\cdot z_\mu(1)=\varphi^{1_A}_{\lambda\mu}\varphi^1_{\mu\omega}T_{w_\mu}y_{\mu'}(1)&=&\varphi^{1_A}_{\lambda\mu}(x_\mu)T_{w_\mu}y_{\mu'}\\
&=&(\sum\limits_{d\in\mathfrak{S}_{\bar{\lambda}}
1_A\mathfrak{S}_{\bar{\mu}}}T_d)\cdot
u^+_{[\mu]}T_{w_\mu}y_{\bar{\mu}'}u^-_{[\mu']}\qquad \qquad \text{by \cite{1}}\\
&=&(\sum\limits_{d\in\mathfrak{S}_{\bar{\lambda}}
1_A\mathfrak{S}_{\bar{\mu}}}T_d)\cdot T_{w_{(1)}\cdots
w_{(m)}}u^+_{[\mu]}T_{w_{[\mu]}}u^-_{[\mu']}y_{\bar{\mu}'}\\
&=&\varphi_{\bar{\lambda}\bar{\mu}}^{1_A}(x_{\bar{\mu}})\cdot
T_{w_{(1)}\cdots
w_{(r)}}v_{[\mu]}y_{\mu^{(r)}{}'\vee\cdots\vee\mu^{(1)}{}'}\qquad \qquad \text{by Lemma \ref{lemdu}}\\
&=&\varphi_{\bar{\lambda}\bar{\mu}}^{1_A}(x_{\bar{\mu}})
\cdot T_{w_{(1)}\cdots
w_{(r)}}\cdot y_{\mu^{(1)}{}'\vee\cdots\vee\mu^{(r)}{}'}\cdot v_{[\mu]}\qquad \qquad \text{by \cite{11}}\\
&=&\varphi_{\bar{\lambda}\bar{\mu}}^{1_A}(x_{\mu^{(1)}\vee\cdots\vee\mu^{(r)}}
T_{w_{(1)}\cdots
w_{(r)}} y_{\mu^{(1)}{}'\vee\cdots\vee\mu^{(r)}{}'})\cdot v_{[\mu]}\\
&=&\varphi_{\bar{\lambda}\bar{\mu}}^{1_A}\varphi_{\bar{\mu}\omega}^1\cdot
T_{w_{(1)}\cdots
w_{(r)}} y_{\mu^{(1)}{}'\vee\cdots\vee\mu^{(r)}{}'}(1)\cdot v_{[\mu]}\\
\end{eqnarray*}
Then, following from the calculation in \cite{6}, for
$A,B\in\mathcal {T}_{\bar{\lambda}}(\bar{\mu})$, we write $A\thicksim
B$ if $A$ and $B$ are \emph{row equivalent} (which has been defined in \cite{8}, i.e., if one tableau $A$ can be changed to $B$ by a sequence of elementary row permutations.). Thus,
$\mathfrak{S}_{\bar{\lambda}}1_A\mathfrak{S}_{\bar{\mu}}=\bigcup_{B\thicksim
A}\mathfrak{S}_{\bar{\lambda}}1_B$. In addition, if
$w\in\mathfrak{S}_n$, we denote by $\overline{w}$ the unique element
of $\mathfrak{S}_\lambda w\cap\mathscr{D}_\lambda$ for some
$\lambda\in\Lambda(n,r)$, i.e., the shortest element in $\mathfrak{S}_\lambda w$.
\begin{eqnarray*}
&&\varphi_{\bar{\lambda}\bar{\mu}}^{1_A}\varphi_{\bar{\mu}\omega}^1\cdot
T_{w_{(1)}\cdots w_{(r)}} y_{\mu^{(1)}{}'\vee\cdots\vee\mu^{(r)}{}'}\\
&=&(\sum\limits_{B\thicksim
A}\varphi_{\bar{\lambda}\omega}^{1_B}T_{w_{(1)}\cdots
w_{(r)}})y_{\mu^{(1)}{}'\vee\cdots\vee\mu^{(r)}{}'}\\
&=&(\sum\limits_{B\thicksim
A}\varphi^1_{\bar{\lambda}\omega}T_{1_B}T_{w_{(1)}\cdots
w_{(r)}})y_{\mu^{(1)}{}'\vee\cdots\vee\mu^{(r)}{}'}\\
&=&(\sum\limits_{B\thicksim
A}q^{K_B}\varphi^1_{\bar{\lambda}\omega}T_{\overline{1_B
w_{(1)}\cdots w_{(r)}}}+s_B)\cdot
y_{\mu^{(1)}{}'\vee\cdots\vee\mu^{(r)}{}'}\qquad \qquad \text{by \cite{6}}
\end{eqnarray*}
where $K_B$ is an integer and $s_B$ is a linear combination of terms
$\varphi^d_{\bar{\lambda}\omega}$ for which
$$\chi(t^{\bar{\lambda}}1_B,t^{\bar{\mu}})>\chi(t^{\bar{\lambda}}d,t^{\bar{\mu}}w_{(1)}\cdots
w_{(r)}).$$ Moreover,
$\chi(t^{\bar{\lambda}}1_A,t^{\bar{\mu}})>\chi(t^{\bar{\lambda}}1_B,t^{\bar{\mu}})=\chi(t^{\bar{\lambda}}\overline{1_Bw_{(1)}\cdots
w_{(r)}},t^{\bar{\mu}}1_Bw_{(1)}\cdots w_{(r)})$ if $B\thicksim A$
but $B\neq A$. Hence
\begin{eqnarray}\label{resultvarphi}
\varphi_{\bar{\lambda}\bar{\mu}}^{1_A}\varphi_{\bar{\mu}\omega}^1\cdot
T_{w_{(1)}\cdots w_{(r)}} y_{\mu^{(1)}{}'\vee\cdots\vee\mu^{(r)}{}'}
=(q^K\varphi_{\bar{\lambda}\omega}^1T_{\overline{1_Aw_{(1)}\cdots
w_{(r)}}}+s)\cdot y_{\mu^{(1)}{}'\vee\cdots\vee\mu^{(r)}{}'}
\end{eqnarray}
where $K$ is an integer and $s$ is a linear combination of terms
$\varphi_{\bar{\lambda}\omega}^d$ with
$$\chi(t^{\bar{\lambda}}1_A,t^{\bar{\mu}})>\chi(t^{\bar{\lambda}}d,t^{\bar{\mu}}w_{(1)}\cdots
w_{(r)}).$$

Now suppose that $\sum\limits_A
c_A\varphi_{\bar{\lambda}\bar{\mu}}^{1_A}\varphi_{\bar{\mu}\omega}^1\cdot
T_{w_{(1)}\cdots w_{(r)}} y_{\mu^{(1)}{}'\vee\cdots\vee\mu^{(r)}{}'}=0$,
where $c_A\in R$ and the sum is over $A\in
\mathcal{T}^{ss}_\lambda(\mu)$. Choose $D\in
\mathcal{T}^{ss}_\lambda(\mu)$ such that $c_A=0$ for all $A$ with
$\chi(t^{\bar{\lambda}}1_A,t^{\bar{\mu}})>\chi(t^{\bar{\lambda}}1_D,t^{\bar{\mu}})$.
If we can prove that $c_D=0$, it will follow that every
coefficient $c_A=0$, and then the proof is completed.

By (\ref{resultvarphi}), there exists an integer $K$ and
$s\in M^\lambda$ such that
\begin{eqnarray*}
\sum\limits_A
c_A\varphi_{\bar{\lambda}\bar{\mu}}^{1_A}\varphi_{\bar{\mu}\omega}^1\cdot
T_{w_{(1)}\cdots w_{(r)}}
y_{\mu^{(1)}{}'\vee\cdots\vee\mu^{(r)}{}'}=c_Dq^K\varphi^1_{\bar{\lambda}\omega}
T_{\overline{1_Dw_{(1)}\cdots
w_{(r)}}}y_{\mu^{(1)}{}'\vee\cdots\vee\mu^{(r)}{}'}+sy_{\mu^{(1)}{}'\vee\cdots\vee\mu^{(r)}{}'}
\end{eqnarray*}
where $s$ is a linear combination of terms
$\varphi_{\bar{\lambda}\omega}^d$($d\in
\mathscr{D}_{\bar{\lambda}}$) for which
\begin{eqnarray}\label{neq}
\chi(t^{\bar{\lambda}}d,t^{\bar{\mu}}w_{(1)}\cdots w_{(r)})\ngeq
\chi(t^{\bar{\lambda}}1_D,t^{\bar{\mu}}).
\end{eqnarray}

Now, suppose
\begin{eqnarray*}
c_Dq^K\varphi^1_{\bar{\lambda}\omega} T_{\overline{1_Dw_{(1)}\cdots
w_{(r)}}}y_{\mu^{(1)}{}'\vee\cdots\vee\mu^{(r)}{}'}+sy_{\mu^{(1)}{}'\vee\cdots\vee\mu^{(r)}{}'}=0
\end{eqnarray*}
and by Lemma \ref{lemmas2}, $\varphi^1_{\bar{\lambda}\omega}
T_{\overline{1_Dw_{(1)}\cdots
w_{(r)}}}y_{\mu^{(1)}{}'\vee\cdots\vee\mu^{(r)}{}'}$ is the linear
combination of terms $\varphi_{\bar{\lambda}\omega}^d$ ($d\in
\mathscr{D}_{\bar{\lambda}}$) for which
$\chi(t^{\bar{\lambda}}d,t^{\bar{\mu}}w_{(1)}\cdots
w_{(r)})=\chi(t^{\bar{\lambda}}\overline{1_Dw_{(1)}\cdots
w_{(r)}},t^{\bar{\mu}}w_{(1)}\cdots
w_{(r)})=\chi(t^{\bar{\lambda}}1_D,t^{\bar{\mu}})$, while
$sy_{\mu^{(1)}{}'\vee\cdots\vee\mu^{(r)}{}'}$ is a linear combination of
terms $\varphi_{\bar{\lambda}\omega}^1$($d\in \mathscr{D}_\lambda$)
for which
$\chi(t^{\bar{\lambda}},t^{\bar{\mu}})\neq\chi(t^{\bar{\lambda}}1_D,t^{\bar{\mu}})$
by (\ref{neq}). Therefore,
$$c_Dq^K\varphi^1_{{\bar{\lambda}}\omega}T_{\overline{1_Dw_{(1)}\cdots
w_{(r)}}}y_{\mu^{(1)}{}'\vee\cdots\vee\mu^{(r)}{}'}=0.$$ But
$\varphi^1_{{\bar{\lambda}}\omega}T_{\overline{1_Dw_{(1)}\cdots
w_{(r)}}}y_{\mu^{(1)}{}'\vee\cdots\vee\mu^{(r)}{}'}\neq 0$, since the
numbers strictly increase down the columns for every component of $D$. Therefore,
$c_D=0$, as we claimed.

Now, we have already known that the elements
$\varphi_{\bar{\lambda}\bar{\mu}}^{1_A}\varphi_{\bar{\mu}\omega}^1\cdot
T_{w_{(1)}\cdots w_{(r)}} y_{\mu^{(1)}{}'\vee\cdots\vee\mu^{(r)}{}'}$ is
linearly independent. It implies that $\varphi^{1_A}_{\lambda\mu}\varphi^1_{\mu\omega}T_{w_\mu}y_{\mu'}=\varphi_{\bar{\lambda}\bar{\mu}}^{1_A}\varphi_{\bar{\mu}\omega}^1\cdot
T_{w_{(1)}\cdots
w_{(r)}} y_{\mu^{(1)}{}'\vee\cdots\vee\mu^{(r)}{}'}\cdot v_{[\mu]}$ are $R$-linearly independent, since by Lemma \ref{lemdu} it is
trivial that $a\cdot v_{[\mu]}=0$ if and only if $a=0$ for any
$a\in\mathcal {H}(\mathfrak{S}_r)$.
\end{proof}

\section{\bf Application to  the Branch rule}
In this section, by using this embedding and restriction functors arised in \cite{23},
we give a new proof of the Branch rule in a cyclotomic $q$-Schur algebra of rank $n$ to one of rank $n+1$.

From now on, throughout this paper, we argue under the following setting:
\begin{eqnarray*}
&&\textbf{m}=(m_1,\cdots,m_r)\text{ such that }m_k\geq n+1\text{ for all }k=1,\cdots,r,\\
&&\textbf{m}'=(m_1,\cdots,m_{r-1},m_r-1),\\
&&\mathscr{S}_{n+1,r}={}_R\mathscr{S}_{n+1,r}(\Lambda_{n+1,r}(\textbf{m})),\\
&&\mathscr{S}_{n,r}={}_R\mathscr{S}_{n,r}(\Lambda_{n,r}(\textbf{m}')).
\end{eqnarray*}
We will omit the subscript $R$ when there is no risk to confuse.

Define the injective map
\begin{eqnarray*}
\gamma:\Lambda_{n,r}(\textbf{m}')\rightarrow \Lambda_{n+1,r}(\textbf{m}), \qquad (\lambda^{(1)},\cdots,\lambda^{(r-1)},\lambda^{(r)})\mapsto (\lambda^{(1)},\cdots,\lambda^{(r-1)},\widehat{\lambda}^{(r)}),
\end{eqnarray*}
where $\widehat{\lambda}^{(r)}=(\lambda_1^{(1)},\cdots,\lambda^{(r)}_{m_r-1},1)$. Put $\Lambda^{\gamma}_{n+1,r}(\textbf{m})= \text{Im} \gamma$,  we have
\begin{eqnarray*}
\Lambda_{n+1,r}^\gamma(\textbf{m})=\{\mu=(\mu^{(1)},\cdots,\mu^{(r)})\in \Lambda_{n+1,r}(\textbf{m})|\mu^{(r)}_{m_r}=1\},
\end{eqnarray*}
where it is defined that $\mu^{(i)}=(\mu^{(i)}_1,\cdots,\mu^{(r)}_{m_i})\in \mathbb{Z}^{m_i}_{>0}$ for $1\leq i\leq r$.

For $\lambda\in \Lambda_{n+1,r}^+$, and $T\in \mathcal{T}^{ss}_\Lambda(\lambda)$, let $T\setminus (n+1)$ be the standard tableau obtained by removing the node
$x$ such that $T(x)=n+1$, and denote the shape of $T\setminus(n+1)$ by $\text{Shape}(T\setminus(n+1))$. Note that $x$ here is a removable node of $\lambda$, and that $\text{Shape}(T\setminus(n+1))=\lambda\setminus x$.

\begin{proposition}\cite{23}(\textbf{Wada inclusion})\label{wadainclusion}
There exists an algebra homomorphism $\iota:\mathscr{S}_{n,r}\rightarrow\mathscr{S}_{n+1,r}$ such that
\begin{eqnarray}
E_{(i,k)}^{(l)}\mapsto E_{(i,k)}^{(l)}\xi,\qquad F_{(i,k)}^{(l)}\mapsto F_{(i,k)}^{(l)}\xi,\qquad 1_\lambda\mapsto 1_{\gamma(\lambda)}
\end{eqnarray}
for $(i,k)\in \Gamma'(\textbf{m}')$, $l\geq1$, $\lambda\in \Lambda_{n,r}(\textbf{m}')$, where $\xi=\sum_{\lambda\in \Lambda^{\gamma}_{n+1,r}(\textbf{m})}1_\lambda$ is an idempotent of
$\mathscr{S}_{n+1,r}$. In particular, we have that $\iota(1_{\mathscr{S}_{n,r}})=\xi$, and that $\iota(\mathscr{S}_{n,r})\subsetneq\xi\mathscr{S}_{n+1,r}\xi$, where $1_{\mathscr{S}_{n,r}}$
is the unit element of $\mathscr{S}_{n,r}$. Moreover, $\iota $ is injective.
\end{proposition}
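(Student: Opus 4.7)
The plan is to construct $\iota$ via the generators-and-relations presentation of $\mathscr{S}_{n,r}$ given in \cite{22}, and then verify each of the four assertions in turn. First I would recall that presentation: generators $\{E_{(i,k)}^{(l)},\ F_{(i,k)}^{(l)},\ 1_\lambda\}$ indexed by $(i,k)\in\Gamma'(\textbf{m}')$, $l\ge 1$, and $\lambda\in\Lambda_{n,r}(\textbf{m}')$, subject to the standard cyclotomic $q$-Schur relations. The pivotal observation is that $\xi$ commutes with each $E_{(i,k)}^{(l)}$ and $F_{(i,k)}^{(l)}$ in $\mathscr{S}_{n+1,r}$ whenever $(i,k)\in\Gamma'(\textbf{m}')$: such a generator shifts a composition by $\pm l\alpha_{(i,k)}$, which only affects entries strictly before the position $(m_r,r)$, so it preserves the defining condition $\mu^{(r)}_{m_r}=1$ of $\Lambda^{\gamma}_{n+1,r}(\textbf{m})$. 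Thus $\xi$ acts as a two-sided unit on the subalgebra generated by those $E$'s and $F$'s.

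Granted this commutation, I would verify the defining relations one at a time. Each relation in $\mathscr{S}_{n,r}$ is an identity among operators on $\bigoplus_{\mu\in\Lambda_{n,r}(\textbf{m}')} M^{\mu}$; under $\iota$, the same relation is transported onto operators on $\bigoplus_{\nu\in\Lambda^{\gamma}_{n+1,r}(\textbf{m})}M^{\nu}$ via the bijection $\mu\leftrightarrow \gamma(\mu)$. Because the actions of $E_{(i,k)}^{(l)}$ and $F_{(i,k)}^{(l)}$ are given by the same Hecke-algebra formulas (depending only on $\mu^{(k)}_i,\mu^{(k)}_{i+1}$ and on $T_1,\ldots,T_{n-1}\subset T_1,\ldots,T_{n}$), and because $\xi$ is central relative to them, every relation pulls back cleanly. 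The identity $\iota(1_{\mathscr{S}_{n,r}})=\xi$ follows from $1_{\mathscr{S}_{n,r}}=\sum_{\lambda\in\Lambda_{n,r}(\textbf{m}')}1_\lambda$ and the orthogonality of the idempotents $\{1_{\gamma(\lambda)}\}$, and then $\iota(a)=\xi\,\iota(a)\,\xi$ for all $a$, giving $\iota(\mathscr{S}_{n,r})\subseteq\xi\mathscr{S}_{n+1,r}\xi$.

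For the strict inclusion, I would do a rank comparison using the semistandard cellular basis of $\mathscr{S}_{n+1,r}$: the rank of $\xi\mathscr{S}_{n+1,r}\xi$ equals $\sum_{\lambda\in\Lambda_{n+1,r}^+(\textbf{m})}|\mathcal{T}^{ss}_{\Lambda^{\gamma}}(\lambda)|^{2}$, while the rank of $\mathscr{S}_{n,r}$ is $\sum_{\lambda\in\Lambda_{n,r}^+(\textbf{m}')}|\mathcal{T}^{ss}_{\Lambda}(\lambda)|^{2}$. The former strictly exceeds the latter, since there exist multipartitions $\lambda\in\Lambda_{n+1,r}^+(\textbf{m})$ not of the form $\gamma(\mu)$ that still admit nonempty $\mathcal{T}^{ss}_{\Lambda^\gamma}(\lambda)$; any cellular basis element $\Psi_{ST}$ with such shape $\lambda$ lies in $\xi\mathscr{S}_{n+1,r}\xi\setminus\iota(\mathscr{S}_{n,r})$.

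The hard part will be injectivity. The plan is to track cellular basis elements: I would show that for $\lambda\in\Lambda_{n,r}^+(\textbf{m}')$ and $S,T\in\mathcal{T}^{ss}_{\Lambda}(\lambda)$, the image $\iota(\Psi_{ST})$ is proportional to a cellular basis element $\Psi_{\widetilde S\widetilde T}$ of $\mathscr{S}_{n+1,r}$ of shape $\gamma(\lambda)$, where $\widetilde S,\widetilde T$ are obtained from $S,T$ by appending the node $(m_r,1)$ in the last component filled with symbol $(m_r,r)$. The assignments $S\mapsto\widetilde S$ and $\lambda\mapsto\gamma(\lambda)$ are both injective, so distinct $\Psi_{ST}$ map to distinct cellular basis elements, giving linear independence. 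Equivalently, and perhaps more conceptually, one can test injectivity on the $q$-Schur modules $\mathcal{A}^\lambda$ from Theorem 3.5: if $\iota(a)=0$ then $a$ must annihilate every $\mathcal{A}^\lambda$ for $\lambda\in\Lambda_{n,r}^+(\textbf{m}')$ under the natural identification of the $\gamma$-restricted Weyl module with $\mathcal{A}^\lambda$, and the faithful action of $\mathscr{S}_{n,r}$ on its Weyl modules forces $a=0$.
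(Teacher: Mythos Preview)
The paper does not give a proof of this proposition at all: it is stated with the citation \cite{23} and then immediately used. Everything in it, including injectivity and the strict inclusion, is quoted from Wada's paper \cite{23}, where the homomorphism is built precisely via the presentation of $\mathscr{S}_{n,r}$ established in \cite{22}. So there is no ``paper's own proof'' to compare your attempt against.

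That said, your outline is essentially the right strategy and matches Wada's approach in \cite{23}: define $\iota$ on generators using the presentation of \cite{22}, check that the defining relations are preserved (your key observation that $E_{(i,k)},F_{(i,k)}$ for $(i,k)\in\Gamma'(\textbf{m}')$ commute with $\xi$ because $\pm\alpha_{(i,k)}$ never touches the coordinate $\mu^{(r)}_{m_r}$ is exactly the mechanism), and read off $\iota(1)=\xi$ and $\iota(\mathscr{S}_{n,r})\subseteq\xi\mathscr{S}_{n+1,r}\xi$ from orthogonality of the idempotents. The rank comparison for strictness is also the natural argument.

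The one place your sketch is genuinely thin is injectivity. The claim that $\iota(\Psi_{ST})$ is \emph{proportional to a single} cellular basis element $\Psi_{\widetilde S\widetilde T}$ is not obvious and in fact is not how Wada argues; a priori $\iota(\Psi_{ST})$ is only a linear combination of cellular basis elements of various shapes, and extracting injectivity from that requires more work. Your alternative via faithfulness on Weyl modules is closer in spirit, but ``the faithful action of $\mathscr{S}_{n,r}$ on its Weyl modules'' is not automatic for an arbitrary commutative ring $R$; Wada handles injectivity by first working over the generic ring $\mathcal{A}=\mathbb{Z}[\hat q,\hat q^{-1},\hat Q_1,\ldots,\hat Q_r]$ and its field of fractions (where semisimplicity makes the argument clean), and then specialising. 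If you want a self-contained proof you should either follow that base-change route or prove carefully that the map $S\mapsto\widetilde S$ induces an identification of cellular structures compatible with $\iota$.
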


 Define a restriction functor $\text{Res}_n^{n+1}:\mathscr{S}_{n+1,r}$-mod$\rightarrow$ $\mathscr{S}_{n,r}$-mod by
\begin{eqnarray*}
\text{Res}_{n}^{n+1}=\text{Hom}_{\mathscr{S}_{n+1,r}}(\mathscr{S}_{n+1,r}\xi,-)\cong \xi\mathscr{S}_{n+1,r}\otimes_{\mathscr{S}_{n+1,r}}-.
\end{eqnarray*}

Recall that, for $\lambda\in \Lambda^+_{n+1,r}$, the $q$-Schur module $\mathcal{A}^\lambda$ of $\mathscr{S}_{n+1,r}$ has an $R$-free basis $\{\varphi_{\mu\lambda}^{1_A}z_\lambda|A\in \mathcal{T}^{ss}_\mu(\lambda),\mu\in \Lambda_{n+1,r}(\textbf{m})\}$. From the definition, we have that
\begin{eqnarray*}
\text{Res}_n^{n+1}(\mathcal{A}^\lambda)=\xi\mathcal{A}^\lambda.
\end{eqnarray*}
Thus, $\text{Res}_n^{n+1}(\mathcal{A}^\lambda)$ has an $R$-free basis $\{\varphi_{\mu\lambda}^{1_A}z_\lambda|A\in \mathcal{T}^{ss}_\mu(\lambda),\mu\in \Lambda_{n+1,r}^\gamma(\textbf{m})\}$.

For a partition $\lambda=(\lambda_1,\cdots,\lambda_m)$ of $n$, we identify the boxes in the Young diagram $\mathcal {N}(\lambda)$ with its position coordinates. Thus,
\begin{eqnarray*}
\mathcal{N}(\lambda)=\{(i,j)\in \mathbb{Z}^+\times\mathbb{Z}^+|j\leq \lambda_i\}.
\end{eqnarray*}
The elements of $\mathcal{N}(\lambda)$ will be called \emph{nodes}. A node of the form $(i,\lambda_i)$ (resp. $(i,\lambda_{i}+1)$) is called \emph{removable} (resp. \emph{addable})
if $i=m$ or $\lambda_i>\lambda_{i+1}$ for $i\neq m$ (resp. $(i,\lambda_i)=(0,1)$ for $\lambda_1=\cdots=\lambda_m=1$ or $i=1$ or $\lambda_{i-1}>\lambda_i$ if $i\neq 1$).

 Let $\lambda=(\lambda^{(1)},\cdots,\lambda^{(r)})$ be an $r$-partition. Then its $\mathcal{N}(\lambda)$ is the union of $\mathcal{N}(\lambda^{(k)})$, $1\leq k\leq r$.
i.e.,  a set of nodes
\begin{eqnarray*}
\mathcal{N}(\lambda)=\{(i,j,k)|i,j\in \mathbb{Z}^+,j\leq \lambda^{(k)}_i,\  1\leq k\leq m\}.
\end{eqnarray*}
A node of $\mathcal{N}(\lambda)$ is said to be {\em removable} (resp. {\em addable}) if it is a removable (resp. addable) node of $\mathcal{N}(\lambda^{(k)})$ for some $k$. Denote by $\mathcal {R}_\lambda$ the set of all removable nodes of $\mathcal{N}(\lambda)$. Then $N=\#\mathcal{R}_\lambda=\sum_{i=1}^r\#\mathcal{R}_{\lambda^{(i)}}$.

A partial ordering $``\succ"$ on $\mathcal{R}_\lambda$ will be fixed from top to bottom and from left to right, that is, it satisfies that
 $$(i,j,k)\succ (i',j',k')\text{ if }k<k'\text{, or if }k=k'\text{ and }i<i'.$$
Then, we have $\mathcal{R}_\lambda=\{\mathfrak{n}_1,\cdots,\mathfrak{n}_N\}$, with the property that $\mathfrak{n}_i\succ\mathfrak{n}_j$ for $i>j$. Let $j_{\mathfrak{n}}$, $\mathfrak{n}\in \mathcal{R}_\lambda$, be the number at the node $\mathfrak{n}$ in $\mathfrak{t}_\lambda$. For example, for $\lambda=\big((31),(22),(1)\big)$, $\mathcal{R}_\lambda=\{(1,3,1), (2,1,1),(1,1,3)\}$.

Also, we define a partial order $\succeq$ on $\mathbb{Z}_{>0}\times\{1,\ldots,r\}$ by
\begin{eqnarray*}
(i,k)\succ (i',k') \text{ if }(i,1,k)\succ(i',1,k').
\end{eqnarray*}

\begin{proposition}\label{multi}\label{proposition}
Let $\lambda\in \Lambda_{n+1,r}^+$, $\mu\in \Lambda^\gamma_{n+1,r}(\textbf{m})$, $A\in \mathcal{T}^{ss}_\mu(\lambda)$. For $(i,k)\in \Gamma'(\textbf{m}')$,
we have the following
\begin{eqnarray}
&&E_{(i,k)}\cdot \varphi_{\mu\lambda}^{1_A}z_\lambda=\sum\limits_{\substack{B\in \mathcal{T}^{ss}_{\mu+\alpha_{(i,k)}}(\lambda)\\ \text{shape}(B\setminus(m_r,r))\trianglerighteq \text{shape}(A\setminus(m_r,r))}}r_B\varphi_{\mu+\alpha_{(i,k)},\lambda}^{1_B}z_\lambda                   \text{    }(r_B\in R);\\
&&F_{(i,k)}\cdot \varphi_{\mu\lambda}^{1_A}z_\lambda=\sum\limits_{\substack{B\in \mathcal{T}^{ss}_{\mu-\alpha_{(i,k)}}(\lambda)\\ \text{shape}(B\setminus(m_r,r))\trianglerighteq \text{shape}(A\setminus(m_r,r))}}r_B\varphi_{\mu-\alpha_{(i,k)},\lambda}^{1_B}z_\lambda                   \text{    }(r_B\in R).
\end{eqnarray}

\end{proposition}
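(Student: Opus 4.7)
The plan is to evaluate both identities on the unit $1\in\mathcal{H}_{n,r}$ and then expand the outcome in the $R$-basis from Theorem~\ref{mtsec2.1}. Applying the defining formula
\[
E_{(i,k)}(m_\mu h)=q^{-\mu_{i+1}^{(k)}+1}\Bigl(\sum_{x\in X_\mu^{\mu+\alpha_{(i,k)}}}q^{\ell(x)}T_x^*\Bigr)h_{+(i,k)}^\mu\,m_\mu h
\]
to the element $\bigl(\varphi^{1_A}_{\mu\lambda}z_\lambda\bigr)(1)\in M^\mu$, expressed in the explicit form obtained in the proof of Theorem~\ref{mtsec2.1}, produces an element of $M^{\mu+\alpha_{(i,k)}}$. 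Theorem~\ref{mtsec2.1} then guarantees that this element admits a unique expansion $\sum_{B}r_B\,\varphi^{1_B}_{\mu+\alpha_{(i,k)},\lambda}z_\lambda$ with $B\in\mathcal{T}^{ss}_{\mu+\alpha_{(i,k)}}(\lambda)$; the work therefore reduces to determining precisely which $B$ may occur with $r_B\neq 0$.

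The central observation is that each coset representative $T_x^*$ with $x\in X_\mu^{\mu+\alpha_{(i,k)}}$ corresponds to a permutation that moves a single entry of type $(i+1,k)$ into a position of type $(i,k)$. Because $(i,k)\in\Gamma'(\textbf{m}')$, none of these permutations touches entries of type $(m_r,r)$, so the unique occurrence of $(m_r,r)$ in the row-standard representative $1_A$ (which exists thanks to $\mu^{(r)}_{m_r}=1$) is preserved throughout the computation. After running the semistandardisation procedure of Lemma~\ref{lemmas2} together with the row-equivalence decomposition $\mathfrak{S}_{\bar\lambda}1_A\mathfrak{S}_{\bar\mu}=\bigcup_{B\sim A}\mathfrak{S}_{\bar\lambda}1_B$ already used in the proof of Theorem~\ref{mtsec2.1}, every semi-standard $B$ with $r_B\neq 0$ is obtained by straightening a row-standard tableau whose $(m_r,r)$-entry still sits at a controlled node of $\lambda$.

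The main obstacle is to translate this combinatorial invariance into the stated dominance $\text{shape}(B\setminus(m_r,r))\trianglerighteq \text{shape}(A\setminus(m_r,r))$. I would handle this by a $\chi$-matrix comparison parallel to that in the proof of Theorem~\ref{mtsec2.1}: by Remark~\ref{threeproper} and Lemma~\ref{lemmas2}, each $B$ appearing in the expansion satisfies $\chi(\mathfrak{t}^{\bar\lambda}1_B,\mathfrak{t}^{\overline{\mu+\alpha_{(i,k)}}})\geq\chi(\mathfrak{t}^{\bar\lambda}1_A,\mathfrak{t}^{\overline{\mu+\alpha_{(i,k)}}})$. Because the $(m_r,r)$-entry contributes identically to both $\chi$-matrices (its row in $\mathfrak{t}^{\bar\mu}$ is disjoint from the rows perturbed by $E_{(i,k)}$), restricting the inequality to the rows and columns indexed by the remaining entries gives a $\chi$-dominance between $A\setminus(m_r,r)$ and $B\setminus(m_r,r)$, and the standard translation of $\chi$-dominance into partition-dominance (the content of \textbf{(4.1)} of~\cite{10}) yields the desired shape inequality. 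The $F_{(i,k)}$ case is entirely analogous: it uses the defining formula for $F_{(i,k)}$ in place of $E_{(i,k)}$, and replaces $\alpha_{(i,k)}$ by $-\alpha_{(i,k)}$ throughout, the $(m_r,r)$-preservation and the $\chi$-comparison going through unchanged.
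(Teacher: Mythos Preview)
Your approach is genuinely different from the paper's. The paper gives essentially a two-line argument: it identifies $\varphi^{1_A}_{\mu\lambda}$ with the cellular element $\Psi_{AT^\lambda}$, invokes Proposition~3.3 of Wada~\cite{23} (together with the cellular-algebra formalism) to obtain the congruence
\[
E_{(i,k)}\cdot\varphi^{1_A}_{\mu\lambda}\equiv\sum_{B}r_B\,\varphi^{1_B}_{\mu+\alpha_{(i,k)},\lambda}\pmod{\mathscr{S}_{n+1,r}^{\rhd\lambda}}
\]
already carrying the stated shape constraint on $B$, and then right-multiplies by $z_\lambda$, using the fact (from the proof of Theorem~\ref{mtsec2.2}) that $\mathscr{S}_{n+1,r}^{\rhd\lambda}\cdot z_\lambda=0$. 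All of the combinatorics behind the shape inequality is thus imported wholesale from~\cite{23}.

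Your attempt to derive that constraint by hand has a genuine gap in the last paragraph. First, the claim that the $(m_r,r)$-entry ``contributes identically to both $\chi$-matrices'' presupposes that this entry occupies the \emph{same} node of $\lambda$ in $B$ as in $A$; but the proposition explicitly allows strict dominance $\text{shape}(B\setminus(m_r,r))\rhd\text{shape}(A\setminus(m_r,r))$, so the node can genuinely move during straightening, and you cannot simply subtract its contribution off both sides. Second, the ``standard translation of $\chi$-dominance into partition-dominance'' you attribute to (4.1) of~\cite{10} is not there: that reference (as used in the proof of Theorem~\ref{mtsec2.2}) is the statement that $x_\mu\mathcal{H}(\mathfrak{S}_n)y_{\lambda'}\neq 0$ forces $\lambda\unrhd\mu$, which compares the types of the two symmetrisers, not the shapes of two tableaux via a $\chi$-inequality. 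What is actually required is an argument that applying $E_{(i,k)}$ and then semistandardising can only move the unique $(m_r,r)$-entry from a removable node $\mathfrak{n}_i$ to some $\mathfrak{n}_j$ with $j\geq i$; this is precisely the combinatorics carried out in~\cite{23}, and the $\chi$-matrix machinery of Lemma~\ref{lemmas2} (which controls the effect of right multiplication by elements of $\mathfrak{S}_{\mu^{(1)\prime}\vee\cdots\vee\mu^{(r)\prime}}$, not of the left action of $E_{(i,k)}$) does not by itself deliver it.
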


\begin{proof}
Following from  the notations of (5.8), (5.9) in \cite{1}, one shows that $\varphi^{1_A}_{\mu\lambda}=\Psi_{AT^{\lambda}}$. On the other hand, by a general
theory of cellular algebras together within Wada's paper \cite{23} (Proposition 3.3), it implies for $(i,k)\in \Gamma'(\textbf{m}')$,
\begin{eqnarray}\label{equ}
E_{(i,k)}\cdot \varphi_{\mu\lambda}^{1_A}\equiv\sum\limits_{\substack{B\in \mathcal{T}^{ss}_{\mu+\alpha_{(i,k)}}(\lambda)\\ \text{shape}(B\setminus(m_r,r))\trianglerighteq \text{shape}(A\setminus(m_r,r))}}r_B\varphi_{\mu+\alpha_{(i,k)},\lambda}^{1_B}\qquad \text{mod}\   \mathscr{S}_{n+1,r}^{\vartriangleright\lambda},
\end{eqnarray}
where $r_B\in R$.

By definitions, $z_\lambda:=\varphi_{\lambda\omega}^1T_w y_{\lambda'}$ and $\mathscr{S}_{n+1,r}^{\vartriangleright\lambda}$ is linearly generated by $\Psi_{ST}$ for $S,T\in \mathcal{T}_\Lambda(\nu)$ with $\nu\vartriangleright\lambda$. It follows that $\mathscr{S}_{n+1,r}^{\vartriangleright\lambda}\cdot z_\lambda=0$. On the other hand,  suppose that there exists some $S,T\in \mathcal{T}^{ss}_\Lambda(\nu)$ such that $\Psi_{ST}z_\lambda\neq 0$, which  means $\lambda=\nu$ due to the proof
of Theorem \ref{mtsec2.2}. This consequence is contradict to the fact $\nu\vartriangleright\lambda$. Finally, we reach the consequence of the first statement after multiple the element $z_\lambda$ on the two sides of (\ref{equ}).

The case for $F_{(i,k)}$ with $(i,k)\in \Gamma'(\textbf{m}')$ can be proved similarly with the above proof in the  case for $E_{(i,k)}$.
\end{proof}

By Theorem \ref{mtsec2.1},
let ${}_R M_i$ be an $R$-submodule of $\text{Res}_n^{n+1}(\mathcal{A}^\lambda)$ spanned by
\begin{eqnarray*}
\{\varphi_{\mu\lambda}^{1_A}z_\lambda|A\in \mathcal{T}_\Lambda^\gamma(\lambda)\cap\mathcal{T}^{ss}_\Lambda(\lambda)\text{ such that }A(\mathfrak{n}_j)=(m_r,r)\text{ for some }j\geq i\},
\end{eqnarray*}
where we put $\mathcal{T}^\gamma_\Lambda(\lambda):=\bigcup_{\mu\in \Lambda^\gamma_{n+1,r}(\textbf{m})}\mathcal{T}_\mu(\lambda)$. When there is no confusion about $R$, we also denote ${}_R M_i$ as $M_i$.
Then we have a filtration of $R$-modules
$$\text{Res}^{n+1}_n(\mathcal{A}^\lambda)=M_1\supset M_2\supset\cdots \supset M_k \supset M_{k+1}=0.$$

For $\lambda\in \Lambda^+_{n+1,r}$ and a removable node $x$ of $\lambda$,  define the semi-standard tableau $T^\lambda_x\in \mathcal{T}^{ss}_\Lambda(\lambda)$ by
\begin{eqnarray}
T^\lambda_x(a,b,c)=\left\{
\begin{array}{cc}
(a.c)\qquad\quad \text{if}~ (a,b,c)\neq x,\\
(m_r,r)\qquad \text{if}~ (a,b,c)=x.
\end{array}
\right.
\end{eqnarray}
We see that $T^\lambda_x\in \mathcal{T}_\Lambda^\gamma(\lambda)\cap\mathcal{T}^{ss}_\Lambda(\lambda) $, and $T^\lambda_x\setminus(m_r,r)=T^{\lambda\setminus x}$,
where the tableau $T^{\lambda\setminus x}$ notes the unique element in the set $\mathcal{T}_{\lambda\setminus x}^{ss}(\lambda\setminus x)$.

From the definition, $M_i/M_{i+1}$ has an $R$-free basis
\begin{eqnarray*}
\{\varphi_{\gamma(\mu)\lambda}^{1_A} z_\lambda+M_{i+1}|A\in \mathcal{T}_\Lambda^\gamma(\lambda)\cap\mathcal{T}^{ss}_\Lambda(\lambda)\text{ such that } A(\mathfrak{n}_i)=(m_r,r)\text{ and } \mu\in \Lambda_{n,r}(\textbf{m})\}.
\end{eqnarray*}
For $A\in \mathcal{T}_\Lambda^\gamma(\lambda)\cap\mathcal{T}^{ss}_\Lambda(\lambda)$ such that $A(\mathfrak{n}_i)=(m_r,r)$, we have $\text{Shape}(A\setminus(m_r,r))=\lambda\setminus \mathfrak{n}_i$ by the definition. Note that $\lambda\setminus\mathfrak{n}_j\rhd\lambda\setminus\mathfrak{n}_i$ if and only if $\mathfrak{n}_j\prec\mathfrak{n}_i$ (i.e., $j>i$). Then, by Proposition \ref{multi}, we see that $\{M_i\}$ is a filtration of $\mathscr{S}_{n,r}$-modules.

Now, we use the main result in Section 3 to give a new proof of the Branch rule of Weyl modules in \cite{23}.

\begin{theorem}\cite{23}\; Assume that $R$ is a field.
For any $\lambda\in \Lambda^+_{n+1,r}(\textbf{m})$, let $\mathfrak{n}_1,\cdots,\mathfrak{n}_k$ be the removable nodes of $\mathcal{N}(\lambda)$ counted from top to bottom, and define
$M_t$ as above for $1\leq t\leq k$. Then, we have a filtration of $\mathscr{S}_{n,1}$-submodule for $\mathcal{A}^\lambda$:
$$0=M_{k+1}\subset M_k\subset\cdots\subset M_1=\mathcal{A}^\lambda$$
with the sections of Weyl modules (or $q$-Schur modules): $M_t/M_{t-1}\cong W^{\lambda\setminus\mathfrak{n}_t}$.

\end{theorem}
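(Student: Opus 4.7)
The plan is to verify, using the explicit basis of Theorem \ref{mtsec2.1} together with the semistandard action formulas of Proposition \ref{multi}, that (i) each $M_t$ is closed under the $\mathscr{S}_{n,r}$-action induced by the Wada embedding $\iota$, and (ii) the natural map sending a basis element of $M_t/M_{t+1}$ to its ``shape-truncated'' counterpart in $\mathcal{A}^{\lambda\setminus\mathfrak{n}_t}$ is an $\mathscr{S}_{n,r}$-module isomorphism, which identifies $M_t/M_{t+1}$ with $W^{\lambda\setminus\mathfrak{n}_t}$ via Theorem \ref{mtsec2.2}.

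For (i), I would take a basis vector $\varphi^{1_A}_{\mu\lambda}z_\lambda\in M_t$, so that $A\in \mathcal{T}_\Lambda^\gamma(\lambda)\cap\mathcal{T}^{ss}_\Lambda(\lambda)$ with $A(\mathfrak{n}_{j_A})=(m_r,r)$ for some $j_A\geq t$, and apply $\iota(E_{(i,k)})$ and $\iota(F_{(i,k)})$ for $(i,k)\in \Gamma'(\textbf{m}')$. By Proposition \ref{multi}, the output is a linear combination of $\varphi^{1_B}_{\mu\pm\alpha_{(i,k)},\lambda}z_\lambda$ with $\text{shape}(B\setminus(m_r,r))\trianglerighteq \lambda\setminus\mathfrak{n}_{j_A}$. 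Since any such $B$ still lies in $\mathcal{T}_\Lambda^\gamma(\lambda)\cap\mathcal{T}^{ss}_\Lambda(\lambda)$, it satisfies $B(\mathfrak{n}_{j_B})=(m_r,r)$ for some $j_B$, and the observation $\lambda\setminus\mathfrak{n}_{j_B}\trianglerighteq\lambda\setminus\mathfrak{n}_{j_A}\Leftrightarrow j_B\geq j_A$ recorded just before the theorem forces $j_B\geq t$. Hence $B\in M_t$, so $\{M_t\}$ is a filtration by $\mathscr{S}_{n,r}$-submodules.

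For (ii), define $\phi_t:M_t/M_{t+1}\longrightarrow \mathcal{A}^{\lambda\setminus\mathfrak{n}_t}$ on basis vectors by
\[
\varphi^{1_A}_{\gamma(\nu)\lambda}\,z_\lambda+M_{t+1}\;\longmapsto\;\varphi^{1_{A\setminus(m_r,r)}}_{\nu,\,\lambda\setminus\mathfrak{n}_t}\,z_{\lambda\setminus\mathfrak{n}_t},
\]
where $A\in\mathcal{T}^{ss}_{\gamma(\nu)}(\lambda)$ satisfies $A(\mathfrak{n}_t)=(m_r,r)$ and $\nu\in\Lambda_{n,r}(\textbf{m}')$. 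Since $A\mapsto A\setminus(m_r,r)$ is a bijection between the two indexing sets of semistandard tableaux, Theorem \ref{mtsec2.1} ensures $\phi_t$ is an $R$-linear isomorphism. To check $\mathscr{S}_{n,r}$-equivariance, apply $\iota(E_{(i,k)})$ or $\iota(F_{(i,k)})$ on the left-hand side and reduce modulo $M_{t+1}$: the terms of Proposition \ref{multi} with $\text{shape}(B\setminus(m_r,r))\vartriangleright\lambda\setminus\mathfrak{n}_t$ correspond to $j_B>t$ and vanish in the quotient, while those with $\text{shape}(B\setminus(m_r,r))=\lambda\setminus\mathfrak{n}_t$ survive and should match term-by-term the output of Proposition \ref{multi} applied to $\varphi^{1_{A\setminus(m_r,r)}}_{\nu,\,\lambda\setminus\mathfrak{n}_t}z_{\lambda\setminus\mathfrak{n}_t}\in\mathcal{A}^{\lambda\setminus\mathfrak{n}_t}$.

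The principal obstacle is the coefficient comparison in this last step: one must show that the scalars $r_B\in R$ produced by Proposition \ref{multi} for the pair $(A,\lambda)$ inside $\mathscr{S}_{n+1,r}$ coincide with those produced for $(A\setminus(m_r,r),\lambda\setminus\mathfrak{n}_t)$ inside $\mathscr{S}_{n,r}$, whenever $\text{shape}(B\setminus(m_r,r))=\lambda\setminus\mathfrak{n}_t$. This demands unwinding the defining formulas for the generators $E_{(i,k)}$, $F_{(i,k)}$ on the semistandard basis as in \cite{1,23} and exploiting the explicit compatibility $\iota(E_{(i,k)})=E_{(i,k)}\xi$, $\iota(F_{(i,k)})=F_{(i,k)}\xi$ of Proposition \ref{wadainclusion}; the key point is that removal of the distinguished box $\mathfrak{n}_t$ (occupied by the symbol $(m_r,r)$) does not interact with the root directions $(i,k)\in\Gamma'(\textbf{m}')$, so the coset sums $X_\mu^{\mu\pm\alpha_{(i,k)}}$ and the factors $h^\mu_{+(i,k)}$ entering the two formulas agree. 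Once this matching is verified, $\phi_t$ is an $\mathscr{S}_{n,r}$-module isomorphism, and composing with Theorem \ref{mtsec2.2} yields $M_t/M_{t+1}\cong W^{\lambda\setminus\mathfrak{n}_t}$, completing the proof.
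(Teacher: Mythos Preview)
Your step (i) coincides with what the paper establishes just before the theorem, using Proposition \ref{multi} and the equivalence $\lambda\setminus\mathfrak{n}_{j_B}\trianglerighteq\lambda\setminus\mathfrak{n}_{j_A}\Leftrightarrow j_B\geq j_A$; that part is fine and matches.

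For step (ii) you take a genuinely different route. The paper does \emph{not} construct an explicit combinatorial map $\phi_t$ on basis vectors and then verify equivariance coefficient-by-coefficient. Instead it exhibits the single element $\varphi_{\tau\lambda}^{1_{T^\lambda_{\mathfrak{n}_t}}}z_\lambda+M_{t+1}$ (with $\tau=\gamma(\lambda\setminus\mathfrak{n}_t)$) as a highest weight vector of weight $\lambda\setminus\mathfrak{n}_t$ in $M_t/M_{t+1}$, by checking directly from Proposition \ref{multi} that all $E_{(j,l)}$ kill it. It then invokes the universal property of Weyl modules from \cite{22} to produce a map $\mathcal{A}^{\lambda\setminus\mathfrak{n}_t}\to\mathscr{S}_{n,r}\cdot(\varphi_{\tau\lambda}^{1_{T^\lambda_{\mathfrak{n}_t}}}z_\lambda)+M_{t+1}$, shows it is an isomorphism over the generic field $\mathcal{K}$ (where Weyl modules are simple), descends to the integral form and specialises to $R$, and concludes by comparing dimensions over the field $R$.

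The payoff of the paper's approach is precisely that it bypasses your ``principal obstacle'': the highest-weight/universality argument never needs to know the individual scalars $r_B$, only the support condition in Proposition \ref{multi}, and the dimension count finishes the identification. Your approach is more concrete and, if the coefficient matching were established, would give the isomorphism directly without passing through $\mathcal{K}$ or using that $R$ is a field for anything beyond the final statement. However, the matching you sketch is not available from the paper as written: Proposition \ref{multi} gives only the shape constraint on $B$, while the actual values of $r_B$ are imported from the cellular computation in \cite{23}, so verifying that they agree between $\mathscr{S}_{n+1,r}$ and $\mathscr{S}_{n,r}$ would require a separate argument going back into those formulas. Your heuristic that the $(m_r,r)$ box ``does not interact'' with the root directions in $\Gamma'(\textbf{m}')$ is plausible, but turning it into a proof of equality of the $r_B$ is real work that remains to be done.
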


\begin{proof}

First of all we set $\widehat{\mu}:=\gamma(\mu)$, and consider the weight decomposition of $\mathscr{S}_{n,r}$-module $M_{i}/M_{i+1}=\bigoplus\limits_{\mu\in \Lambda_{n,r}(\textbf{m})} {}_\mu(M_i/M_{i+1})=\bigoplus\limits_{\mu\in \Lambda_{n,r}(\textbf{m})} 1_{\mu}\cdot M_i/M_{i+1}=\bigoplus \limits_{\mu\in \Lambda_{n,r}(\textbf{m})} 1_{\widehat{\mu}}(M_{i}/M_{i+1})$,
where $1_{\widehat{\mu}}(M_{i}/M_{i+1})$ is generated by
$$\{\varphi_{\widehat{\mu}\lambda}^{1_A} z_\lambda+M_{i+1}|A\in \mathcal{T}_{\Lambda}^\gamma(\lambda)\cap\mathcal{T}^{ss}_{\Lambda}(\lambda)\text{ such that } A(\mathfrak{n}_i)=(m_r,r)\}.$$
Since $A\setminus(m_r,r)\in \mathcal{T}^{ss}_\mu(\lambda\setminus\mathfrak{n}_i)$, we can find that ${}_\mu(M_i/M_{i+1})\neq0$ only if $\lambda\unrhd\widehat{\mu}$, which implies that $\lambda\setminus\mathfrak{n}_i\unrhd\mu$.

Let $\mathfrak{n}_i=(a,b,c)$. Note that $E_{(j,l)}\cdot \varphi_{\widehat{\mu}\lambda}^{1_A}z_\lambda$ is a linear combination of $\{\varphi_{\widehat{\mu}+\alpha_{(j,l)},\lambda}^{1_B}z_\lambda|B\in \mathcal{T}^{ss}_{\widehat{\mu}+\alpha_{(j,l)}}(\lambda)\}$ and that $\mathcal{T}^{ss}_{\widehat{\mu}+\alpha_{(j,l)}}(\lambda)=\emptyset$ unless $\lambda\unrhd \widehat{\mu}+\alpha_{(j,l)}$.

We have $T_{\mathfrak{n}_i}^\lambda
\in \mathcal{T}^{ss}_\tau(\lambda)$ in the case of $\tau:=\widehat{\lambda\setminus\mathfrak{n}_i}$, i.e., $\tau=\lambda-(\alpha_{(a,c)}+\alpha_{(a+1,c)}+\cdots+\alpha_{(m_r-1,r)})$.

If $(j,l)\succ(a,c)$, we have $E_{(j,l)}\cdot  \varphi_{\tau\lambda}^{1_A}z_\lambda=0$ since $\lambda\ntrianglerighteq \tau+\alpha_{(j,l)} \text{ for any }A\in \mathcal{T}^{ss}_\tau(\lambda)$.

If $(j,l)\preceq(a,c)$, for any $S\in \mathcal{T}^{ss}_{\tau+\alpha_{(j,l)}}(\lambda)$ together with the definition of semi-standard tableaux, we can easily
check that $S\big((a',b',c')\big)\succeq(j,l)$ for any $(a',b',c')\in \lambda$ satisfying $(a',c')\succeq (j,l)$. This implies that
\begin{eqnarray}\label{used}
|S\setminus(m_r,r)|\neq|\lambda\setminus \mathfrak{n}_i|\text{ for any }S\in \mathcal{T}^{ss}_{\tau+\alpha_{(j,l)}}(\lambda),
\end{eqnarray}
since $(a,c)\succeq(j,l)$ and $T^\lambda_{\mathfrak{n}_i}\big((a,b,c)\big)=(m_r,r)\preceq (j,l)$. From now on, we note the tableau $T_{\mathfrak{n}_i}^\lambda$ as $X$.

Thus, Proposition \ref{proposition} together with (\ref{used}) implies that
\begin{eqnarray*}
E_{(j,l)}\cdot\varphi_{\tau\lambda}^{1_{X}}\cdot z_\lambda=0\in M_{i+1}\text{ for any }(j,l)\in \Gamma'(\textbf{m}').
\end{eqnarray*}
Thus, $\varphi_{\tau\lambda}^{1_{X}}\cdot z_\lambda+M_{i+1}$ is a highest weight vector of weight $\lambda\setminus\mathfrak{n}_i$ of $\mathscr{S}_{n,r}$-module in sense of \cite{22}. Moreover, since the Weyl modules are simple modules in category of ${}_\mathcal{K}\mathscr{S}_{n,r}$-modules,  due to the universality of Weyl modules in \cite{22}, we
have an ${}_\mathcal{K}\mathscr{S}_{n,r}$-isomorphism:
\begin{eqnarray}
\theta^{\lambda\setminus\mathfrak{n}_i}_\mathcal{K}:\qquad {}_\mathcal{K}\mathcal{A}^{\lambda\setminus\mathfrak{n}_i}\rightarrow{}_\mathcal{K}\mathscr{S}_{n,r}\cdot(\varphi_{\tau\lambda}^{1_{X}}\cdot z_\lambda)+{}_\mathcal{K}M_{i+1}.
\end{eqnarray}
Note that $\theta^{\lambda\setminus\mathfrak{n}_i}_\mathcal{K}$ is determined by $\theta^{\lambda\setminus\mathfrak{n}_i}_\mathcal{K}(\varphi_{{\lambda\setminus\mathfrak{n}_i}{\lambda\setminus\mathfrak{n}_i}}^1\cdot z_{\lambda\setminus\mathfrak{n}_i})=\varphi_{\tau\lambda}^{1_{X}}\cdot z_\lambda+{}_\mathcal{K}M_{i+1}$. We see that $\theta^{\lambda\setminus\mathfrak{n}_i}_\mathcal{A}$ is a restriction of $\theta^{\lambda\setminus\mathfrak{n}_i}_\mathcal{K}$ which assigns the submodule
$_\mathcal{A}\mathcal{A}^{\lambda\setminus\mathfrak{n}_i}$ onto the submodule ${}_\mathcal{A}\mathscr{S}_{n,r}\cdot(\varphi_{\tau\lambda}^{1_{X}}\cdot z_\lambda)+{}_\mathcal{A}M_{i+1}$. Then, we find that $\theta_\mathcal{A}^{\lambda\setminus\mathfrak{n}_i}$ is an isomorphism of ${}_\mathcal{A}\mathscr{S}_{n,r}$-modules. Furthermore, by the argument of specialization to any arbitrary commutative ring, it follows that $\theta^{\lambda\setminus\mathfrak{n}_i}_{R}:=\theta^{\lambda\setminus\mathfrak{n}_i}_\mathcal{A}\otimes_\mathcal{A}R$ is an isomorphism for the algebra ${}_R\mathscr{S}_{n,r}$.

$R$ is assumed to be a field. Since $W^{\lambda\setminus \mathfrak{n}_i}\cong\mathcal{A}^{\lambda\setminus\mathfrak{n}_i}\cong {}_R\mathscr{S}_{n,r}\cdot(\varphi_{\tau\lambda}^{1_{X}}\cdot z_\lambda)+{}_R M_{i+1}$, which is a ${}_R\mathscr{S}_{n,r}$-submodule of $M_i/M_{i+1}$, we finally reach the consequence by comparing the dimensions of $\mathcal{A}^{\lambda\setminus\mathfrak{n}_i}$ and $M_i/M_{i+1}$.
\end{proof}

\noindent
{\bf Acknowledgements: }{\em The authors thank the support from the projects of the National Natural Science Foundation of China (No.11271318 and No.11171296) and the Specialized Research Fund for the Doctoral Program of Higher Education of China (No.20110101110010).  }

\bigskip

\end{document}